\title[Lower bounds for cohomology growth]
 {On lower bounds for cohomology growth in $p$-adic analytic towers} 
\author[S. Kionke]{Steffen Kionke}
\address{Max Planck Institute for Mathematics \\ Vivatsgasse 7 \\ 53111 Bonn \\ Germany.}
\email{skionke@mpim-bonn.mpg.de}
\thanks{The author would like to thank the Max Planck Institute for Mathematics in Bonn for their hospitality and support.}
\date{\today}
\subjclass[2010]{Primary 20J06; Secondary 11F75, 20E15, 57M07}
\keywords{Betti numbers, cohomology, arithmetic groups}
\theoremstyle{plain}
\newtheorem{lemma}{Lemma}
\newtheorem{theorem}{Theorem}
\newtheorem{proposition}{Proposition}
\theoremstyle{definition}
\newtheorem{definition}{Definition}
\newtheorem{remark}{Remark}
\newtheorem{example}{Example}
\DeclareMathOperator{\Tr}{Tr}
\DeclareMathOperator{\Ad}{Ad}
\DeclareMathOperator{\inn}{int}
\DeclareMathOperator{\Lie}{Lie}
\DeclareMathOperator{\GL}{GL}
\DeclareMathOperator{\diag}{diag}
\DeclareMathOperator{\vcd}{vcd}
\DeclareMathOperator{\SL}{SL}
\DeclareMathOperator{\Res}{Res}
\DeclareMathOperator{\rank}{rank}
\DeclareMathOperator{\sign}{sign}
\providecommand{\typ}[1]{({#1})}
\providecommand{\calL}{\mathcal{L}}
\providecommand{\cO}{\mathcal{O}}
\providecommand{\up}[1]{\,^{#1}}
\providecommand{\isomorph}{\stackrel{\simeq}{\longrightarrow}}
\providecommand{\semidirect}{\rtimes}
\providecommand{\Gm}{\mathbb{G}_m}
\providecommand{\Ip}{\mathfrak{p}}
\providecommand{\Ia}{\mathfrak{a}}
\providecommand{\LG}{\mathfrak{g}}
\providecommand{\LZ}{\mathfrak{z}}
\providecommand{\LK}{\mathfrak{k}}
\providecommand{\LP}{\mathfrak{p}}
\providecommand{\bbR}{\mathbb{R}}
\providecommand{\bbQ}{\mathbb{Q}}
\providecommand{\bbZ}{\mathbb{Z}}
\providecommand{\bbF}{\mathbb{F}}
\providecommand{\bbC}{\mathbb{C}}
\providecommand{\normal}{\trianglelefteq}
\begin{document}
\begin{abstract}
 Let $p$ and $\ell$ be two distinct prime numbers and let $\Gamma$ be a group.
 We study the asymptotic behaviour of the mod-$\ell$ Betti numbers in $p$-adic analytic towers of
 finite index subgroups.
 If $\Theta$ is a finite $\ell$-group of automorphisms of $\Gamma$, our main theorem allows to
 lift lower bounds for the mod-$\ell$ cohomology growth in the fixed point group $\Gamma^\Theta$ to lower bounds
 for the growth in $\Gamma$.
 We give applications to $S$-arithmetic groups and we also
 obtain a similar result for cohomology with rational coefficients.
\end{abstract}

\maketitle

\section{Introduction}

The study of (co)homology growth of a group $\Gamma$ is concerned with the question how the 
(co)homology and associated invariants change in towers of finite index (normal) subgroups.
 In recent years (co)homology growth has become a rather active topic with contributions by
L\"uck \cite{Lueck1994, Lueck2013}, Lackenby \cite{Lackenby2009a, Lackenby2009b} and
Calegari-Emerton \cite{CalegariEmerton2009, CalegariEmerton2011}.
In particular, Calegari and Emerton proved upper bound results for mod-$p$ cohomology growth in $p$-adic analytic towers 
(see also Bergeron, Linnell, L\"uck and Sauer \cite{BLLS2012} for a different perspective).

Throughout the article $p$ and $\ell$ denote two distinct prime numbers.
The main result in this article provides a simple way to prove asymptotic lower bounds for 
the mod-$\ell$ Betti number growth in $p$-adic analytic towers of subgroups.

\subsection{The main result}
Let $\Gamma$ be a group of type \typ{VFP}, by definition this means that
every torsion-free finite index subgroup is of type \typ{FP}.
Recall that a group $\Gamma'$ is of type \typ{FP} if the trivial group module $\bbZ$ admits a finite length
resolution by finitely generated projective $\bbZ[\Gamma']$-modules (see \cite[p.199]{BrownBook1982}).
In particular, every torsion-free finite index subgroup of $\Gamma$ has finite mod-$\ell$ cohomology.
Assume there is an injective group homomorphism $\iota: \Gamma \to G$, where $G$ is a compact $p$-adic analytic group.
We will assume further that the image of $\iota$ is dense in $G$.
We fix a finite $\ell$-group $\Theta$ of continuous automorphisms of $G$ and we assume that the image $\iota(\Gamma)$
is stable under $\Theta$. Thus $\Theta$ also acts by automorphisms on the group~$\Gamma$.
From now on we identify $\Gamma$ with its image under $\iota$.
We fix an open, normal, $\Theta$-stable, uniform pro-$p$ subgroup $U \normal G$ (see Lemma \ref{lem:exU} below)
 and we consider the associated lower $p$-series of $U$ defined by
\begin{equation*}
  P_1(U) := U \quad\text{ and }\quad P_{i+1}(U) := \Phi(P_i(U)).
\end{equation*}
Here $\Phi$ to denotes the Frattini subgroup. For example, if $G = \GL_n(\bbZ_p)$ (with $p$ odd), then the principal congruence subgroup $U$
of level one is uniform and $P_i(U)$ is the principal congruence subgroup of level $i$.
For every $i\geq 1$ we define 
    \begin{equation*}
       \Gamma_i := \Gamma \cap P_i(U).
    \end{equation*}
We study the mod-$\ell$ cohomology growth for the tower of torsion-free, normal, finite index subgroups $(\Gamma_i)_i$ of $\Gamma$.
Inspired by \cite{BLLS2012} we shall call this a $p$-adic analytic tower. 
Note that for every $i$ the group $\Gamma_i$ is $\Theta$-stable since the subgroups $P_i(U)$ are characteristic in $U$.
\begin{theorem}\label{thm:modGrowth}
   Let $r \geq 0$ be an integer.
   Suppose there are positive real numbers $\alpha \in [0,1]$ and $\lambda \in [0,\dim(G^\Theta)]$ such that 
   \begin{equation*}
       \sum_{i=r}^\infty \dim_{\bbF_\ell} H^i(\Gamma^\Theta_n, \bbF_\ell) \gg \Bigl(\frac{[\Gamma^\Theta : \Gamma^\Theta_n]}{p^{(n-1)\lambda}}\Bigr)^\alpha,
   \end{equation*}
   as $n$ tends to infinity, then 
    \begin{equation*}
       \sum_{i=r}^\infty \dim_{\bbF_\ell} H^i(\Gamma_n, \bbF_\ell) \gg [\Gamma : \Gamma_n]^{d\alpha}
   \end{equation*}
   where $d = \frac{\dim(G^\Theta)-\lambda}{\dim(G)}$ as $n$ tends to infinity\footnote{Here ``$\gg$`` means the inequality ``$\geq$`` holds for large $n$ up to a positive constant.}.
   In particular, the conclusion applies with $r=0$, $\alpha=1$ and $\lambda=0$ whenever $\Gamma^\Theta$ is a finite group.
\end{theorem}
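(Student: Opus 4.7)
My plan is to deduce the statement from a Smith-style inequality for the finite $\ell$-group action, combined with an index arithmetic argument in the uniform pro-$p$ tower.

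Observe first that $P_n(U)$ is characteristic in $U$ and therefore $\Theta$-stable, so
\[
(\Gamma_n)^\Theta = \Gamma \cap P_n(U) \cap \Gamma^\Theta = \Gamma^\Theta \cap P_n(U) = \Gamma^\Theta_n.
\]
Because $\Gamma$ is of type VFP, the torsion-free subgroup $\Gamma_n$ is of type FP and $\Gamma_n \semidirect \Theta$ is virtually of type FP. I would use this to produce a bounded $\Theta$-equivariant cochain complex $C^\bullet$ of finitely generated $\bbF_\ell$-modules computing $H^*(\Gamma_n,\bbF_\ell)$. The classical Smith inequality for finite $\ell$-group actions on such complexes, in its partial-sum form, then gives
\[
\sum_{i\geq r} \dim_{\bbF_\ell} H^i(\Gamma^\Theta_n,\bbF_\ell) \leq C \sum_{i\geq r} \dim_{\bbF_\ell} H^i(\Gamma_n,\bbF_\ell),
\]
with a constant $C$ depending only on $\Theta$ and independent of $n$.

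Second, I would convert indices using the $p$-adic analytic structure. Density of $\Gamma$ in $G$ gives $[\Gamma:\Gamma_n]=[G:P_n(U)]$, and uniformity of $U$ gives $[U:P_n(U)]=p^{(n-1)\dim G}$, so $[\Gamma:\Gamma_n] \asymp p^{(n-1)\dim G}$. Since $|\Theta|$ is coprime to $p$, averaging shows that $U^\Theta$ is itself a uniform pro-$p$ group of dimension $\dim G^\Theta$ with lower $p$-series $P_n(U^\Theta) = P_n(U) \cap U^\Theta$. Provided one establishes that $\Gamma^\Theta$ is dense enough in $G^\Theta$ to yield $[\Gamma^\Theta:\Gamma^\Theta_n] \asymp p^{(n-1)\dim G^\Theta}$, the definition $d=(\dim G^\Theta - \lambda)/\dim G$ yields
\[
\Bigl(\frac{[\Gamma^\Theta:\Gamma^\Theta_n]}{p^{(n-1)\lambda}}\Bigr)^\alpha \;\asymp\; [\Gamma:\Gamma_n]^{d\alpha},
\]
and chaining this with the Smith inequality and the hypothesis produces the desired conclusion.

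The main obstacle is establishing the Smith inequality in the partial-sum form with a constant uniform in $n$: the total-sum version is classical, but cutting off at degree $\geq r$ requires an equivariant-cohomology spectral sequence (or a Tate-cohomology argument applied to suitable truncations), and controlling $C$ independently of the unbounded ranks of the resolution of $\Gamma_n$ needs some care — the key is that $C$ should depend only on the finite group $\Theta$ (and perhaps $[G:U]$). A secondary obstacle is the comparison of the fixed-point tower index $[\Gamma^\Theta:\Gamma^\Theta_n]$ with $p^{(n-1)\dim G^\Theta}$, since density of $\Gamma$ in $G$ does not automatically imply density of $\Gamma^\Theta$ in $G^\Theta$; this would need a dedicated lemma, presumably exploiting the structure of $U^\Theta$ and a lifting argument for $\Theta$-invariant elements in the finite quotients $G/P_n(U)$.
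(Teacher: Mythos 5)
Your approach has a genuine gap that cannot be repaired by the lemmas you anticipate needing. The crucial mistake is in your step~2, where you assert $[\Gamma^\Theta:\Gamma^\Theta_n]\asymp p^{(n-1)\dim G^\Theta}$ and note this "would need a dedicated lemma." No such lemma can exist: in general $[\Gamma^\Theta:\Gamma^\Theta_n]\leq p^{(n-1)\dim G^\Theta}$ with the inequality strict, and in the extreme case emphasized in the theorem's final sentence ($\Gamma^\Theta$ finite) the left side is bounded while the right side is unbounded. In that case your Smith inequality $\sum_{i\geq r}\dim H^i(\Gamma^\Theta_n,\bbF_\ell)\leq C\sum_{i\geq r}\dim H^i(\Gamma_n,\bbF_\ell)$ only yields $\sum\dim H^i(\Gamma_n,\bbF_\ell)\gg 1$, while the theorem asserts growth like $[\Gamma:\Gamma_n]^{\dim G^\Theta/\dim G}$.

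The missing ingredient is Adem's group-theoretic form of Smith theory, which replaces your single-term inequality by
\begin{equation*}
  \sum_{i\geq r}\dim_{\bbF_\ell} H^i(\Gamma_n,\bbF_\ell)\;\geq\;\sum_{[c]\in H^1(\Theta,\Gamma_n)}\;\sum_{i\geq r}\dim_{\bbF_\ell} H^i(\Gamma_n^{\Theta|c},\bbF_\ell),
\end{equation*}
a sum over \emph{twisted} fixed-point groups indexed by the nonabelian cohomology set $H^1(\Theta,\Gamma_n)$. The paper then restricts to classes in $\ker\bigl(h_n: H^1(\Theta,\Gamma_n)\to H^1(\Theta,\Gamma_1)\bigr)$, for which the twisted fixed-point groups are all isomorphic to $\Gamma_n^\Theta$, and shows via the exact sequence of pointed sets and Proposition~\ref{prop:uniform} that
\begin{equation*}
  |\ker(h_n)|\cdot[\Gamma_1^\Theta:\Gamma_n^\Theta]=p^{(n-1)\dim G^\Theta}.
\end{equation*}
This identity is precisely what accounts for the "missing" growth when $[\Gamma^\Theta:\Gamma^\Theta_n]$ is small — the cardinality of $\ker(h_n)$ grows to compensate, and the final bound
\begin{equation*}
  |\ker(h_n)|\cdot\Bigl(\frac{[\Gamma^\Theta:\Gamma^\Theta_n]}{p^{(n-1)\lambda}}\Bigr)^\alpha
  =[\Gamma_1^\Theta:\Gamma_n^\Theta]^{\alpha-1}\,p^{(n-1)(\dim G^\Theta-\alpha\lambda)}
  \geq p^{(n-1)\alpha(\dim G^\Theta-\lambda)}
\end{equation*}
uses only $\alpha\leq 1$ and the \emph{upper} bound $[\Gamma_1^\Theta:\Gamma_n^\Theta]\leq p^{(n-1)\dim G^\Theta}$, not any lower bound on the fixed-point index. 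Your remaining concerns (partial-sum Smith inequality with constant $C$, structure of $U^\Theta$) are legitimate and are handled in the paper by citing Floyd's refinement and by the material in Section~\ref{sec:prop}, but without the Adem enumeration and the $|\ker(h_n)|$ computation the argument cannot produce the stated exponent.
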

To obtain strong bounds out of this result we would like to choose $\alpha$ large and $\lambda$ small. Afterwards
we always try to choose $r$ as large as possible to obtain more information on the degrees in which the Betti numbers grow.

The proof is based on Smith theory and its group theoretic interpretation by Adem. 
We sketch the argument -- a full proof is given in Section \ref{sec:proof}.
Choose a suitable $K(\Gamma_n,1)$ space $X_n$ which is equipped with an action of the semidirect product $\Gamma_n\rtimes\Theta$. 
The basic inequality of Smith-Floyd \cite{Floyd1952} implies
\begin{equation*}
   \sum_{i=r}^\infty \dim H^i(X_n,\bbF_\ell) \geq \sum_{i=r}^\infty \dim H^i(X_n^\Theta,\bbF_\ell).
\end{equation*}
Adem gave a group theoretic interpretation of the space of fixed points (cf.~\cite{Adem1996}), which yields
\begin{equation*}
   \dim H^i(X_n^\Theta,\bbF_\ell) = \sum_{[c] \in H^1(\Theta,\Gamma_n)} \dim H^i(\Gamma_n^{\Theta|c},\bbF_\ell),
\end{equation*}
where the sum runs over the first non-abelian Galois cohomology set of $\Theta$ with values in $\Gamma_n$.
Here $\Gamma_n^{\Theta|c}$ denotes the subgroup fixed by the $c$-twisted action of $\Theta$ (see Section~\ref{sec:proof}).
We do not assume that the index of the fixed point groups actually grows as $n$ tends to infinity. Instead, 
we show that the cardinality of
the cohomology set $H^1(\Theta,\Gamma_n)$ grows sufficiently fast.
The underlying results are obtained in Section~\ref{sec:prop} using pro-$p$ methods.
Here we use the assumption that the tower is $p$-adic analytic.

In Section \ref{sec:applications} we give two applications of Theorem~\ref{thm:modGrowth} concerning the cohomology of $S$-arithmetic groups.
In the first application (Theorem~\ref{thm:basechange}) we lift cohomology growth of principal congruence subgroups
 over base change of Galois extensions of order $2^m$.
In the second application (Theorem~\ref{thm:splitGroups}) we give a uniform lower bound for the mod-$\ell$
 cohomology growth of $S$-arithmetic subgroups of split semi-simple groups.
In Section \ref{sec:rational} we investigate the question whether the cohomology growth obtained via Theorem~\ref{thm:modGrowth}
is already attained for the growth of cohomology with rational coefficients.
In fact, under stronger hypotheses we obtain a similar result (Theorem~\ref{thm:Qgrowth}) concerning the rational cohomology
 growth of arithmetic groups. 

\subsection{Smith theory and non-constant coefficients}
Since there is a Smith theory for sheaf cohomology (cf.~Thm.~19.7 in \cite{Bredon1997}), it
is tempting to ask for a generalization of Smith theory (and Theorem \ref{thm:modGrowth}) for group cohomology with non-constant coefficients.
Clearly, given a finite dimensional $\bbF_\ell$ vector space $M$ with an action of $\Gamma$, there is a finite index normal subgroup
of~$\Gamma$ which acts trivially. Passing to this subgroup (and possibly shrinking to make it $\Theta$-stable) we can simply apply 
Theorem \ref{thm:modGrowth}.
However, if we do not allow passing to a finite index subgroup first, then we cannot
apply Smith theory, since necessary conditions are in general not fulfilled 
by sheaves induced by a non-constant group module on the Eilenberg-MacLane space.

Consider the following counter-example.
Let $\Gamma = \bbZ$ and let $\Theta = \bbZ/2\bbZ$ be acting on $\Gamma$ by inversion, in particular
$\Gamma^\Theta = \{0\}$.
The group $\Gamma$ acts by cyclic permutations on the $\bbF_2$ vector space
\begin{equation*}
   M = \{\:(x,y,z) \in \bbF_2 \:|\: x + y + z = 0\:\}.
\end{equation*}
It is easy to check that $H^*(\bbZ, M) = 0$. However, $H^0(\Gamma^\Theta, M) = M$  and so the basic inequality of Smith theory
 $\dim H^*(\bbZ, M) \geq \dim H^*(\bbZ^\Theta, M)$ fails.
This gives a counter-example to the extremal case $d=0$ of Theorem \ref{thm:modGrowth} for non-constant coefficients.

\subsection{Further comments} 
Whereas strong upper bound results have been obtained recently (cf.~\cite{CalegariEmerton2009}, \cite{BLLS2012}),
lower bounds for cohomology growth seem to be a more complicated issue.
The best results are known for the homology of low dimensional manifolds.
Lackenby \cite{Lackenby2009b} studied the growth of the first mod-$p$ Betti number
and obtained strong asymptotic lower bounds under assumptions on the group $\Gamma$ which are inspired by $3$- and $4$-manifold topology.
Calegari and Emerton obtained precise growth results for the first mod-$p$ homology of compact $3$-manifolds (cf. \cite{CalegariEmerton2011}).
For higher dimensional cases it seems that there are no general results
(F.~Calegari pointed out that their basic technique should yield results for cocompact arithmetic lattices in general). There are some 
cases for arithmetic groups where the computation of Lefschetz numbers
 implies certain lower bounds (eg.~\cite{KSchwermer2012}).
Such applications can be obtained directly from our Theorem~\ref{thm:Qgrowth} without computation of Lefschetz numbers.
Moreover, for arithmetic groups there is also a fruitful connection to the theory of limit multiplicities and automorphic representations
(see Sarnak-Xue \cite{SarnakXue1991} for a conjecture on the asymptotic behaviour of multiplicities).
In this framework there are some lower bound results due to Rajan and Venkataramana \cite[Thm.6]{RajanVenky2001}.

\subsection{Notation and Conventions}\label{def:cofinaltower}
Throughout $p$ and $\ell$ denote two distinct prime numbers.
For convenience we make the following definitions.
\begin{enumerate}
 \item  A \emph{cofinal $p$-tower} in a group $\Gamma$ is a decreasing sequence of finite index subgroups 
$\Gamma_i \subseteq \Gamma$ such that the following two conditions hold:
\begin{enumerate}
 \item $\bigcap_i \Gamma_i = \{1\}$ and
 \item for all $n$ the group $\Gamma_n$ is normal in $\Gamma_1$ and $\Gamma_{n}/\Gamma_{n+1}$ is an elementary abelian $p$-group.
\end{enumerate}
 \item Let $G$ be a real semi-simple Lie group with a finite number of connected components and let $K$ be a maximal compact subgroup.
       We say that $G$ \emph{has discrete series} -- for simplicity  just \emph{DS} --
      if the ranks of the complexified Lie algebras of $G$ and $K$ agree.
\end{enumerate}

\section{Applications to arithmetic groups}\label{sec:applications}
The groups of type \typ{VFP} of particular interest in this article are $S$-arithmetic subgroups of semi-simple algebraic groups.
We demonstrate how Theorem~\ref{thm:modGrowth} can be used in this situation. In the first application we stick to arithmetic groups,
in the second application (Thm.~\ref{thm:splitGroups}) we treat $S$-arithmetic groups for sufficiently small sets $S$.

Let $F$ be an algebraic number field with ring of integers $\cO_F$ and let $G$ be a connected semi-simple linear algebraic $F$-group.
We fix an $F$-rational embedding $\phi: G \to \GL_N$.
We define the real Lie group $G_\infty = G(F \otimes_\bbQ \bbR)$ and we fix a maximal compact subgroup $K_\infty \subset G_\infty$.
Recall that we say $G_\infty$ has discrete series (short ``DS'') if the complex ranks $\rank_\bbC(K_\infty)$ and  $\rank_\bbC(G_\infty)$ agree.
As usual we put $q = 1/2 \cdot \dim(G_\infty/K_\infty)$.

For any proper ideal $\Ia \subset \cO_F$ we define the associated principal congruence subgroups
$\Gamma(\Ia) \subset G(F)$ by 
\begin{equation*}
     \Gamma(\Ia) = \phi^{-1}\bigl(\ker(\GL_N(\cO_F) \to \GL_N(\cO_F/\Ia))\bigr).
\end{equation*}
Recall the following important result:
For every prime ideal $\Ip \subset \cO_F$ one has
\begin{equation*}
  \lim_{k\to\infty} \frac{H^i(\Gamma(\Ip^k), \bbC)}{[\Gamma(\Ip):\Gamma(\Ip^k)]} = \begin{cases}
                                                                                      (-1)^q\chi(\Gamma(\Ip)) \quad &\text{ if $G_\infty$ has DS and $i = q$}\\
											      0 \quad &\text{ otherwise }.
                                                                                   \end{cases}
\end{equation*}
Note further that in this case the Euler characteristic $\chi(\Gamma(\Ip))$ is non-zero.
The result follows from the theory of limit multiplicites of de George-Wallach, Rohlfs-Speh and Savin. 
More precisely, one might use Thm.~3.4 in \cite{CalegariEmerton2009} to obtain the case where the limit is zero. Finally, one only needs
to use the fact that the Euler characteristic grows linearly with the index (cf.~\cite{Serre1971}).
In fact, by L\"uck's approximation theorem~\cite{Lueck1994} this result describes the $L^2$ Betti numbers of $\Gamma(\Ip)$.

Hence, if $G_\infty$ has DS, we have asymptotic lower bounds
\begin{equation}\label{eq:lowerbound}
    H^q(\Gamma(\Ip^k),\bbF_\ell) \gg [\Gamma(\Ip):\Gamma(\Ip^k)]
\end{equation}
for every prime number $\ell$. Theorem \ref{thm:modGrowth} makes it possible to lift this cohomology growth to other groups. 

In order to apply Theorem \ref{thm:modGrowth} we need to know whether the closure of an arithmetic group at some
finite place is actually an open compact subgroup.
For a prime ideal $\Ip \subset \cO_F$ we denote the completion of $F$ w.r.t.~$\Ip$ by $F_\Ip$.
\begin{lemma}\label{lem:openclosure}
  Let $G$ be a connected semi-simple linear algebraic group over $F$. 
  Let $\Gamma \subseteq G(F)$ be an arithmetic group.  
  If the simply connected covering $\widetilde{G}$ of $G$ has strong approximation,
  then the closure of $\Gamma$ in $G(F_\Ip)$ is an open compact subgroup.
\end{lemma}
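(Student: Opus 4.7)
My plan is to establish compactness and openness of $\overline{\Gamma}$ in $G(F_\Ip)$ separately. Compactness is the easier part: since $\Gamma$ is commensurable with $\phi^{-1}(\GL_N(\cO_F))$, after passing to the finite index subgroup $\Gamma_0 = \Gamma \cap \phi^{-1}(\GL_N(\cO_F))$ we obtain a subgroup contained in $\phi^{-1}(\GL_N(\cO_\Ip))$, which is closed in the compact group $\GL_N(\cO_\Ip)$ via $\phi$. Hence $\overline{\Gamma_0}$ is compact, and $\overline{\Gamma}$ is a finite union of translates of it.

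For openness, I would exploit strong approximation for $\widetilde{G}$. Let $\pi: \widetilde{G} \to G$ be the isogeny, and let $S$ be a finite set of places with $\Ip \notin S$ such that $\widetilde{G}(F)$ is dense in $\widetilde{G}(\mathbb{A}^S)$. Choose open compact subgroups $K_\Ip \subset \widetilde{G}(F_\Ip)$ and $K^{S\cup\{\Ip\}} \subset \widetilde{G}(\mathbb{A}^{S\cup\{\Ip\}})$, and set $\widetilde{\Gamma} = \widetilde{G}(F) \cap (K_\Ip \times K^{S\cup\{\Ip\}})$. The intersection of a dense set with an open set is dense in that open set, so $\widetilde{\Gamma}$ is dense in $K_\Ip \times K^{S\cup\{\Ip\}}$; projecting to the $\Ip$-component, the closure of $\widetilde{\Gamma}$ in $\widetilde{G}(F_\Ip)$ equals the open compact subgroup $K_\Ip$.

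To transfer this to $G$: the isogeny $\pi$ is étale at the identity (its differential is a Lie algebra isomorphism), so the induced map $\widetilde{G}(F_\Ip) \to G(F_\Ip)$ is open. Thus $\pi(K_\Ip)$ is open in $G(F_\Ip)$ and contained in $\overline{\pi(\widetilde{\Gamma})}$. Since $\pi(\widetilde{\Gamma})$ is an arithmetic subgroup of $G(F)$ and hence commensurable with $\Gamma$, the intersection $\Gamma \cap \pi(\widetilde{\Gamma})$ has finite index in each. A finite-index closed subgroup of a locally compact group that contains an open subgroup is itself open, so the closure of $\Gamma \cap \pi(\widetilde{\Gamma})$ is open in $G(F_\Ip)$, and $\overline{\Gamma}$ inherits this openness.

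The main technical obstacle is the correct application of strong approximation: one must verify that $\Ip$ can be excluded from the set $S$ over which density holds, and then carefully intersect with an open compact neighbourhood to extract a single-place density statement from an adelic one. The bookkeeping for the commensurability step that transfers openness from $\overline{\pi(\widetilde{\Gamma})}$ to $\overline{\Gamma}$ is a secondary subtlety, since a priori the closure of an intersection can be strictly smaller than the intersection of closures, so one genuinely needs the finite-index-gives-open argument rather than a direct set-theoretic identity.
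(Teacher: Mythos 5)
Your proof is correct and follows essentially the same route as the paper: invoke strong approximation for an arithmetic subgroup of $\widetilde{G}(F)$, push its open compact closure through the isogeny $\pi$ using that $\pi_{F_\Ip}$ is an open map, and transfer the conclusion to $\Gamma$ via commensurability of arithmetic groups. The paper is only slightly more economical — it takes an arithmetic $\Gamma_0$ merely containing $\pi(\Delta)$ (Borel, Cor.~7.13) and leaves compactness implicit, while you use that the image of an arithmetic group is arithmetic and treat compactness separately — but the underlying argument is the same; note also that the topological fact you actually need is simply that a closed finite-index subgroup of a topological group is open, with no further hypotheses required.
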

\begin{proof}
 Let $\pi: \widetilde{G} \to G$ be the $F$-rational covering morphism.
 The derivative of $\pi_{F_\Ip}: \widetilde{G}(F_\Ip) \to G(F_\Ip)$ is the identity, therefore
 $\pi_{F_\Ip}$ is a submersion. In particular, it is an open map.
 Due to the commensurability of arithmetic groups it suffices to prove the claim for some arithmetic group $\Gamma_0$.
 Let $\Delta \subseteq \widetilde{G}(F)$ be an arithmetic group, by Cor.~7.13 in \cite{Borel1969} there
 is an arithmetic subgroup $\Gamma_0 \subset G(F)$ such that $\pi(\Delta) \subset \Gamma_0$.
 Since $\widetilde{G}$ has strong approximation, the closure $\overline{\Delta} \subseteq \widetilde{G}(F_\Ip)$ is open and compact.
 We obtain 
  \begin{equation*}
      \pi(\overline{\Delta}) = \overline{\pi(\Delta)} \subseteq \overline{\Gamma_0} \subset G(F_\Ip),
  \end{equation*}
   and the claim follows from the fact that $\pi_{F_\Ip}$ is open.
\end{proof}

\subsection{Application 1: Base change}
Let $E/F$ be a Galois extension of number fields with Galois group $\Theta$.
We consider the semi-simple algebraic $F$-group 
$H = \Res_{E/F}(G \times_F E)$ obtained by restriction of scalars
and we study congruence subgroups of $H$.
Note that $\Theta$ acts on $H$ by $F$-rational automorphisms.
Let $\Ip \subset \cO_F$ be a prime ideal,
the associated principal congruence subgroups 
\begin{equation*}
   \Gamma(\Ip, k) = \phi^{-1}\Bigl(\ker\bigl( \GL_N(\cO_E) \to \GL_N(\cO_E/\Ip^k\cO_E)\bigr)\Bigr) \subset H(F) = G(E)
\end{equation*}
are stable under $\Theta$.
Further, we define $\Gamma = H(\cO_F) = G(\cO_E)$ via the embedding $\phi$. 
\begin{theorem} \label{thm:basechange}
Assume that $[E:F] = 2^m$. 
  Let $\Ip \subset \cO_F$ be a prime ideal which does not divide $2$.
  If $G_\infty$ has DS and the simply connected covering of $H$ has strong approximation, then
   \begin{equation*}
       \sum_{i=q}^\infty \dim H^i(\Gamma(\Ip, k), \bbF_2) \gg [\Gamma: \Gamma(\Ip, k)]^{1/[E:F]}
   \end{equation*}
   as $k$ tends to infinity, where $q = \frac{1}{2}\dim(G_\infty/K_\infty)$.
\end{theorem}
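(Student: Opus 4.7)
The plan is to apply Theorem~\ref{thm:modGrowth} with $\ell=2$ and $p$ the residue characteristic of $\Ip$, which is odd since $\Ip\nmid 2$, using the Galois group $\Theta$, whose order is the power of two $[E:F]=2^m$, as the finite $\ell$-group of automorphisms. The first step is to embed $\Gamma = H(\cO_F) = G(\cO_E)$ densely in a compact $p$-adic analytic group. Since the simply connected cover of $H$ has strong approximation, Lemma~\ref{lem:openclosure} implies that the closure $\widehat\Gamma$ of $\Gamma$ in $H(F_\Ip)$ is open and compact. The $F$-rational action of $\Theta$ on $H$ extends to a continuous action on $H(F_\Ip)$ that preserves $\Gamma$ and hence $\widehat\Gamma$; Lemma~\ref{lem:exU} then supplies a $\Theta$-stable open normal uniform pro-$p$ subgroup $U \normal \widehat\Gamma$, from which the tower $\Gamma_n := \Gamma \cap P_n(U)$ is built.

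The second step is to compute the fixed-point data and extract the exponent. Using that $(E\otimes_F F_\Ip)^\Theta = F_\Ip$ and $\cO_E^\Theta = \cO_F$ one gets $H(F_\Ip)^\Theta = G(F_\Ip)$ and $\Gamma^\Theta = G(\cO_F)$, an arithmetic subgroup of $G$ over $F$. Consequently, the $p$-adic analytic dimensions satisfy
\[
\dim(\widehat\Gamma) = [E:F]\cdot \dim_F G\cdot [F_\Ip:\bbQ_p], \qquad \dim(\widehat\Gamma^\Theta) = \dim_F G\cdot [F_\Ip:\bbQ_p],
\]
so that the constant appearing in Theorem~\ref{thm:modGrowth} is $d = 1/[E:F]$.

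The third step is to verify the hypothesis of Theorem~\ref{thm:modGrowth} with $r=q$, $\alpha=1$ and $\lambda=0$ and to apply it. The DS assumption on $G_\infty$ makes the linear lower bound~\eqref{eq:lowerbound} available for the principal congruence tower $\Gamma(\Ip^k)\subset G(\cO_F)$; since both $(\Gamma(\Ip^k))$ and $(\Gamma^\Theta_n)$ are cofinal $p$-towers in $\Gamma^\Theta$ whose successive indices grow at the same exponential rate $p^{\dim_F G\cdot[F_\Ip:\bbQ_p]}$, they are interleaved with uniformly bounded index ratios, and therefore
\[
\sum_{i\ge q} \dim_{\bbF_2} H^i(\Gamma^\Theta_n,\bbF_2) \ \geq\ \dim_{\bbF_2} H^q(\Gamma^\Theta_n,\bbF_2) \ \gg\ [\Gamma^\Theta:\Gamma^\Theta_n].
\]
Theorem~\ref{thm:modGrowth} then produces
\[
\sum_{i\ge q} \dim_{\bbF_2} H^i(\Gamma_n,\bbF_2) \ \gg\ [\Gamma:\Gamma_n]^{1/[E:F]},
\]
and the same interleaving argument between $(\Gamma_n)$ and the given congruence tower $(\Gamma(\Ip,k))$ inside $\Gamma$ transfers the bound to the statement.

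The main obstacle I anticipate is the comparison of towers: making precise, on both the $\Gamma^\Theta$- and the $\Gamma$-sides, that the analytic tower produced through the $p$-series of $U$ is interleaved with the stated principal congruence tower in such a way that neither the hypothesis of Theorem~\ref{thm:modGrowth} nor the target index $[\Gamma:\Gamma(\Ip,k)]$ is distorted by more than a bounded multiplicative factor. Once this bookkeeping is in place, the rest is a mechanical application of Theorem~\ref{thm:modGrowth} together with the dimension count that produces the exponent $1/[E:F]$.
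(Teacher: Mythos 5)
Your proposal follows the same overall strategy as the paper: embed $\Gamma$ in the $p$-adic analytic group $H(F_\Ip)$ via Lemma~\ref{lem:openclosure}, take $\Theta = \mathrm{Gal}(E/F)$ as the finite $2$-group, note $H(F_\Ip)^\Theta = G(F_\Ip)$, feed in the DS lower bound \eqref{eq:lowerbound} with $r=q$, $\alpha=1$, $\lambda=0$, and read off $d = 1/[E:F]$ from the dimension count. That much is correct. The genuine difference lies in how the two proofs handle the tower: the paper does \emph{not} invoke Lemma~\ref{lem:exU} to produce a generic uniform $U$ and then compare; instead it takes $U = K_\Ip(k_0) = \ker\bigl(H(\cO_\Ip)\to H(\cO_\Ip/\Ip^{k_0})\bigr)$ for $k_0$ large, uses the identity $\Phi(K_\Ip(k)) = K_\Ip(k+e)$ (where $\Ip^e \parallel p\cO_F$) to see that $\Gamma\cap P_i(U)$ \emph{is} the principal congruence subgroup $\Gamma(\Ip,k_0+(i-1)e)$, and so Theorem~\ref{thm:modGrowth} applies to the stated tower with no interleaving at all. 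Your detour through a generic $U$ and an interleaving step is avoidable, and the paper's choice eliminates exactly the bookkeeping you flag as the main obstacle.

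Two points in your interleaving argument need repair. First, your assertion that both $(\Gamma(\Ip^k))_k$ and $(\Gamma_n^\Theta)_n$ have successive index $p^{\dim G\cdot[F_\Ip:\bbQ_p]}$ is false whenever $\Ip$ is ramified over $p$: the congruence tower steps by $p^{f\dim G}$ ($f$ the residue degree), while the $p$-series of the uniform group $U^\Theta$ steps by $p^{ef\dim G} = p^{[F_\Ip:\bbQ_p]\dim G}$. The conclusion you want (commensurable, interleavable with bounded index ratios) is still true, but not because the rates are equal; the cleaner route is to invoke L\"uck's approximation theorem directly for the cofinal normal tower $(\Gamma_n^\Theta)$ rather than comparing to \eqref{eq:lowerbound}. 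Second, the transfer from $(\Gamma_n)$ to $(\Gamma(\Ip,k))$ needs the observation that restriction on $\bbF_2$-cohomology is injective along subgroups of $p$-power (hence odd) index, so a lower bound for the larger group passes to the smaller one; you leave this implicit, and without it ``interleaved'' does not automatically move Betti number bounds between towers.
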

\begin{proof}
   Let $p$ be the prime number such that $\Ip \cap \bbZ = p\bbZ$.
   We denote the completion of $F$ w.r.t.~the ideal $\Ip$ by $F_\Ip$, moreover we denote the valuation ring in $F_\Ip$ by~$\cO_\Ip$.
   The closure of $\Gamma(\Ip,k)$ in the $p$-adic analytic group $H(F_\Ip)$ is an open compact subgroup
   by Lemma \ref{lem:openclosure}.
   We define the open compact subgroup 
    \begin{equation*}
         K_\Ip(k) := \ker\bigl( H(\cO_\Ip)\to H(\cO_\Ip/\Ip^k\cO_\Ip)\bigr)
    \end{equation*}
   (always with respect to the embedding $\phi:G\to \GL_N$).
   These groups form a neighbourhood basis of open subgroups in $H(F_\Ip)$ and therefore 
   \begin{equation*}
        \overline{\Gamma(\Ip,k)} = K_\Ip(k)
   \end{equation*}
    for all sufficiently large $k$. Note that, if $H$ has strong approximation, then this holds for all $k$.
    Further, for all sufficiently large $k$ the p-adic analytic group $K_\Ip(k)$ is a uniform pro-$p$ group and we have the identity
    \begin{equation*}
        \Phi(K_\Ip(k)) = K_\Ip(k+e),
    \end{equation*}
     where $e$ is largest integer such that $\Ip^e$ divides the ideal $p\cO_F$. Note that $H^\Theta = G$.
     Henceforth we may apply Theorem~\ref{thm:modGrowth} to the groups $\Gamma(\Ip,k)$, at least for sufficiently large $k$.
     Here we choose $r = q$, $\alpha=1$ and $\lambda=0$, which is possible due to equation \eqref{eq:lowerbound} (using that $G_\infty$ has DS).
     Clearly, the fixed point group $H(F_\Ip)^\Theta$ is just $G(F_\Ip)$ and moreover  
     we observe 
     \begin{equation*}
         \dim(K_\Ip(k)) = \dim(H(F_\Ip)) = [E:F] \dim(G(F_\Ip)). \qedhere
     \end{equation*}
\end{proof}

The same proof works for mod-$\ell$ cohomology if $[E:F] = \ell^m$ for an odd prime~$\ell$. 
However, in this case $H$ has discrete series and the claim is clear.
For instance, this theorem can be applied to symplectic groups.
Of course it is possible to use a lower bound produced by Theorem~\ref{thm:basechange} 
and to lift it (via Thm.~\ref{thm:modGrowth}) yet to another group. 
We make this more precise with the following example.

\begin{example}\label{ex:SL}
Let $F$ be a totally real number field and let $E/F$ be a Galois extension of degree $[E:F] = 2^m$.
Consider the special linear group $\SL_{2n}/E$. Let $\Ip$ be a prime ideal of $\cO_F$ which does not divide $2$.
We study $\SL_{2n}(\cO_E)$ and the associated principal congruence subgroups
\begin{equation*}
    \Gamma(\Ip, k) := \ker\bigl(\SL_{2n}(\cO_E) \to \SL_{2n}(\cO_E/\Ip^k\cO_E)\bigr).
\end{equation*}
 In this situation one obtains
   \begin{equation*}
       \sum_{i = [F:\bbQ]\frac{n^2+n}{2}}^{\vcd(\SL_{2n}(\cO_E))} \dim_{\bbF_2} H^i(\Gamma(\Ip,k), \bbF_2)
     \gg [\SL_{2n}(\cO_E):\Gamma(\Ip,k)]^{\frac{n(2n+1)}{2^m(4n^2-1)}}
   \end{equation*}
   as $k$ tends to infinity.
   Here $\vcd(\SL_{2n}(\cO_E))$ denotes the virtual cohomological dimension of $\SL_{2n}(\cO_E)$.
   It follows from the work of Borel-Serre \cite[11.4.4]{BorelSerre1973} that the virtual cohomological dimension is given by  
   \begin{equation*}
       \vcd(\SL_{2n}(\cO_E)) = 2^{m+1}[F:\bbQ]n^2 + (s-2)n - (s+t-1).
   \end{equation*}
   Here $[E:\bbQ] = s + 2t$ where $s$ denotes the number of real places and $t$ the number of complex places of $E$.
   To obtain this result, one simply applies Theorem \ref{thm:modGrowth} for the finite group $\Theta$ generated by the standard symplectic involution and uses the lower bounds
    obtained from Theorem \ref{thm:basechange} for the symplectic group.   
\end{example}

\subsection{Application 2: Split groups}

Let $F$ be an algebraic number field. 
Let $s$ denote the number of real places of $F$ and $t$ the number of complex places of $F$, then $[F:\bbQ] = s + 2t$.
We fix a finite set of places $S$ of $F$ which contains the set of all archimedean places. 
Theorem \ref{thm:modGrowth} enables us to prove 
a general result on cohomology growth for $S$-arithmetic subgroups in split $F$-groups.
For certain groups (for instance if $S$ is the set of all archimedean places and $G_\infty$ has DS) this bound is too weak, however the bound is uniformly valid for all
split semi-simple groups.

\begin{theorem}\label{thm:splitGroups}
  Let $G$ be a semi-simple split $F$-group of rank $r$ and assume that the finite set of places $S$ satisfies $|S|\leq[F:\bbQ]$.
  Suppose $F$ contains all $\ell$-th roots of unity for some prime $\ell$ and choose any prime number $p \neq \ell$ which is prime to all finite places in $S$.
  Given any $S$-arithmetic subgroup $\Gamma \subset G(F)$ and a finite dimensional $\bbF_\ell[\Gamma]$-module $V$, 
there is a cofinal $p$-tower $(\Gamma_n)_{n=1}^\infty$ (cf.~\ref{def:cofinaltower}) 
  such that
   \begin{equation*}
       \sum_{i= r(|S|-1)}^\infty \dim_{\bbF_\ell} H^i(\Gamma_n, V) \gg [\Gamma:\Gamma_n]^{\alpha}
   \end{equation*}
   with $\alpha = \frac{r([F:\bbQ]-|S|+1)}{\dim(G)[F:\bbQ]}$ as $n$ tends to infinity.
\end{theorem}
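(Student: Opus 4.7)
The plan is to apply Theorem~\ref{thm:modGrowth} with a finite $\ell$-group $\Theta$ built from the $\ell$-torsion of an $F$-split maximal torus of $G$. As a preliminary reduction I would pass to the finite-index normal subgroup of $\Gamma$ acting trivially on $V$; on this subgroup the $V$-cohomology equals $\dim(V)$ copies of the $\bbF_\ell$-cohomology, and the index in $\Gamma$ only changes by a multiplicative constant, so the asymptotic statement is unaffected. Hence I may assume $V = \bbF_\ell$.

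Fix an $F$-split maximal torus $T\subset G$ of rank $r$ and set $\Theta := T[\ell](F)$. Since $F$ contains all $\ell$-th roots of unity, $\Theta \cong (\bbZ/\ell\bbZ)^r$ is a finite $\ell$-group acting on $G$ by conjugation. Because no root of $G$ with respect to $T$ restricts to the trivial character on $T[\ell]$, one has $G^\Theta = Z_G(T[\ell]) = T$. After replacing $\Gamma$ with $\bigcap_{\theta\in\Theta}\theta\Gamma\theta^{-1}$ (still of finite index in the original group), I may assume $\Gamma$ is $\Theta$-stable, and then $\Gamma^\Theta = \Gamma\cap T(F)$ is an $S$-arithmetic subgroup of the split torus $T(F)\cong (F^\times)^r$. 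By Dirichlet's $S$-unit theorem $\Gamma^\Theta$ is virtually free abelian of $\bbZ$-rank $r(|S|-1)$.

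Next I would set up the $p$-adic analytic tower using the diagonal embedding $\Gamma\hookrightarrow \prod_{v\mid p} G(F_v)$ (well defined because $p$ is coprime to every finite place of $S$). Since the simply connected cover of the split semi-simple group $G$ has strong approximation, Lemma~\ref{lem:openclosure} implies that the closure $\mathcal{G}$ of $\Gamma$ in $\prod_{v\mid p} G(F_v)$ is an open compact $p$-adic analytic group of dimension $[F:\bbQ]\dim(G)$, which is $\Theta$-stable and contains $\Gamma$ densely. Since $\ell\neq p$, the $\Theta$-action on $\Lie(\mathcal{G})$ is semisimple and $\dim \mathcal{G}^\Theta = [F:\bbQ]\cdot r$. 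I then pick a $\Theta$-stable open uniform pro-$p$ normal subgroup $U\normal\mathcal{G}$ (Lemma~\ref{lem:exU}) and put $\Gamma_n := \Gamma \cap P_n(U)$; standard checks (trivial intersection, normality in $\Gamma_1 = \Gamma\cap U$, elementary abelian successive quotients) show that $(\Gamma_n)$ is a cofinal $p$-tower.

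Finally I apply Theorem~\ref{thm:modGrowth} with the parameters $r = r(|S|-1)$, $\alpha=1$ and $\lambda = r(|S|-1)$. For $n$ sufficiently large, $\Gamma_n^\Theta$ is torsion-free abelian of rank $r(|S|-1)$, hence $H^\bullet(\Gamma_n^\Theta,\bbF_\ell)$ is the exterior algebra on $r(|S|-1)$ generators and $\sum_{i\geq r(|S|-1)}\dim H^i(\Gamma_n^\Theta,\bbF_\ell)\geq 1$. The key estimate to verify is $[\Gamma^\Theta:\Gamma_n^\Theta]\leq C\cdot p^{(n-1)r(|S|-1)}$: the image of $\Gamma^\Theta$ inside the finite $p$-group $\mathcal{G}^\Theta/P_n(U)^\Theta$ is (up to the finite torsion of $\Gamma^\Theta$) generated by $r(|S|-1)$ elements, and every element of the uniform quotient has order dividing $p^{n-1}$. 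With this in hand the hypothesis of Theorem~\ref{thm:modGrowth} is satisfied, and the conclusion produces
\[
 d\alpha = \frac{[F:\bbQ]\cdot r - r(|S|-1)}{[F:\bbQ]\dim(G)} = \frac{r([F:\bbQ] - |S| + 1)}{\dim(G)[F:\bbQ]},
\]
matching the claim. The hard part will be this last index estimate: the naive bound $[\Gamma^\Theta:\Gamma_n^\Theta]\leq [\mathcal{G}^\Theta:P_n(U)^\Theta]\sim p^{(n-1)[F:\bbQ]\cdot r}$ would force $\lambda = [F:\bbQ]\cdot r$ and give the vacuous conclusion $d=0$; the whole content of the argument lies in exploiting the rank deficit $[F:\bbQ]-|S|+1$ between the $\bbZ$-rank of $\Gamma^\Theta$ and the $p$-adic dimension of $\mathcal{G}^\Theta$, which is precisely what the Dirichlet $S$-unit theorem provides.
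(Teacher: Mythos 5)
Your outline matches the paper's strategy almost everywhere (reduction to trivial coefficients, strong approximation and Lemma~\ref{lem:openclosure} to get an open compact closure, Lemma~\ref{lem:exU} for a $\Theta$-stable uniform $U$, the index estimate $[\Gamma^\Theta:\Gamma_n^\Theta]\leq C\,p^{(n-1)r(|S|-1)}$ from Dirichlet's $S$-unit theorem, and the parameters $\lambda=r(|S|-1)$, $\alpha=1$ in Theorem~\ref{thm:modGrowth}), but there is one genuine gap: your choice $\Theta=T[\ell](F)$ for a maximal split torus $T\subset G$ itself, together with the claim that no root restricts trivially to $T[\ell]$ and hence $G^\Theta=T$. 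That claim fails whenever some root lies in $\ell X(T)$, which happens precisely because $G$ is not assumed adjoint. The simplest counterexample is already in the scope of the theorem: $G=\SL_2$ over $\bbQ$ with $\ell=2$, where the root is $\alpha(\diag(t,t^{-1}))=t^2$, so $\alpha\in 2X(T)$, and $T[2]=\{\pm I\}$ is central; then $G^\Theta=G$, $\Gamma^\Theta=\Gamma$ is not virtually abelian of rank $r(|S|-1)$, and the hypothesis you feed into Theorem~\ref{thm:modGrowth} is exactly the conclusion you are trying to prove. The same problem occurs for $\mathrm{Sp}_{2n}$ and other non-adjoint groups whenever $\ell$ divides the relevant part of the center.

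The paper's proof repairs exactly this point: $\Theta$ is taken to be the $\ell$-torsion of a maximal $F$-split torus $T$ of the \emph{adjoint} group $\Ad(G)=G/C(G)$, acting on $G$ by $F$-rational inner automorphisms. Since $\Ad(G)$ is adjoint, the roots span $X(T)$, and any root is a simple root for some basis, hence is not divisible by $\ell$ in $X(T)$; therefore no root vanishes on $\Theta$, so $\Lie(G)^\Theta=\Lie(T)$ and $(G^\Theta)^0$ is a maximal $F$-split torus $T'$. A second, smaller, correction: even then $G^\Theta$ is only an extension of a finite group by $T'$ (centralizers of finite abelian subgroups need not be connected), so the paper imposes an auxiliary congruence condition away from $S$ to arrange $\Gamma_0\cap G^\Theta\subset T'$, which is what guarantees that the fixed groups $\Gamma_n^\Theta$ are honestly free abelian of rank $r(|S|-1)$ with nonvanishing cohomology in that degree. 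With these two modifications your argument becomes the paper's proof.
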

\begin{proof}
   Passing to a finite index subgroup we can assume that $G$ is connected and that $\Gamma$ acts trivially on $V$.
   Let $\widetilde{G}$ denote the universal covering of $G$. Since $G$ is $F$-split, we see that $\widetilde{G}$ is split as well,
   hence it has strong approximation.
   Consider the associated adjoint group $\Ad(G) = G/C(G)$ with the projection $f: G \to \Ad(G)$.
   In fact, $\Ad(G)$ is also a split semi-simple $F$-group.
   Choose some maximal $F$-split torus $T \subseteq \Ad(G)$.
   By assumption $F$ contains the $\ell$-th roots of unity and since $T \cong \Gm^r$ over $F$,
   the elements of order $\ell$ in $T(F)$ form a finite group $\Theta$ isomorphic
   to $(\bbZ/\ell\bbZ)^r$. This group acts by $F$-rational inner automorphisms on $G$.
   Since $\Ad(G)$ is adjoint, the roots of $T$ acting on the Lie algebra $\Lie(G)$ span the group of characters
   $X(T)$ of $T$ (cf.~\cite[(2.23)]{BorelTits1972}).
   Any root can be considered a simple root for some system of simple roots (cf.~\cite[10.3]{Humphreys1972}) and
   thus cannot be in $\ell X(T)$. We see that no root vanishes entirely on $\Theta$ and
   we deduce that $\Lie(G)^\Theta = \Lie(T)$. This implies that the identity component $(G^\Theta)^0$ is a maximal $F$-split torus $T'$
   isogenous to $T$ via the projection $f$. Hence $G^\Theta$ is an extension of a finite group by~$T'$.
   
   Take $\Gamma_0$ to be a finite index $\Theta$-stable subgroup of $\Gamma$ such that $\Gamma_0 \cap G^\Theta \subset T'$.
   This is possible since $T'$ is of finite index in $G^\Theta$ (embed $G\to \GL_N$ and apply a suitable congruence condition away from $S$). 
   The closure $U_0$ of $\Gamma_0$ in $G(F \otimes_\bbQ \bbQ_p)$ is open and compact, since $\widetilde{G}$ has strong approximation 
   (cf.~Lemma \ref{lem:openclosure}).
   By Lemma \ref{lem:exU} there is an open, compact, normal, $\Theta$-stable, uniform pro-$p$ subgroup $U \subseteq U_0$.
   We define $\Gamma_n := P_n(U) \cap \Gamma_0$.
   The fixed point groups $\Gamma_n^\Theta$ are torsion-free arithmetic subgroups of the split torus $T'$ and therefore are free abelian
   of rank $r(|S|-1)$ by Dirichlet's $S$-unit theorem.
   Note that the free abelian group $\Gamma_n^\Theta$ of rank $r(|S|-1)$ has non-vanishing cohomology in degree $r(|S|-1)$.
   Moreover, since $\Gamma_n/\Gamma_{n+1}$ is an elementary abelian $p$-group for every $n$, we conclude that
   \begin{equation*}
       [\Gamma_1^\Theta:\Gamma_n^\Theta] \leq p^{(n-1)r(|S|-1)}. 
   \end{equation*}
   We define $\lambda = r(|S|-1)$ and $\alpha = 1$. Theorem \ref{thm:modGrowth} implies the claim  
    since the dimension of $G(F\otimes_\bbQ \bbQ_p)$ as $p$-adic analytic group
   is $[F:\bbQ]\dim(G)$ and the dimension of the fixed point group is $r[F:\bbQ] = r(s+2t)$.  
\end{proof}

\begin{remark}
   If $F$ is totally real, then the assumption $|S| \leq [F:\bbQ]$ restricts the theorem to arithmetic groups. Moreover, a totally real
   field only contains the roots of unity $\pm1$, so we can only speak about $\bbF_2$ cohomology.

   Note further that in the situation of Theorem \ref{thm:splitGroups} the real rank of the Lie group
   $G_\infty$ is $r(s+t)$. 
   If $S$ is the set of archimeden places and $\Gamma$ is an irreducible arithmetic group, it is known that the cohomology with $\bbC$-coefficients of $\Gamma$ below the degree $r(s+t)$ comes 
   only from the cohomology of the compact dual symmetric space (cf.~\cite[XIV, 2.2]{BorelWallach2000}).
   However, it seems that a similar result is not known for cohomology with mod-$\ell$ coefficients (and probably does not hold).
\end{remark}

\section{Uniform pro-$p$ groups and finite order automorphisms}\label{sec:prop}
In this section we prove some auxiliary results concerning pro-$p$ groups, finite order automorphisms and non-abelian cohomology.
Throughout $G$ denotes a pro-$p$ group.
The lower $p$-series of $G$ will be denoted by $P_i(G)$ (a general definition can be found in 1.15 in \cite{DdSMS1991}). 
We shall be mainly concerned with the case where $G$ is a uniform pro-$p$ group (see \cite{DdSMS1991}).
In this case the lower $p$-series is defined as
\begin{equation*}
  P_1(G) := G \quad\text{ and }\quad P_{i+1}(G) := \Phi(P_i(G)).
\end{equation*}
Here we use $\Phi$ to denote the \emph{Frattini subgroup}.

Let $\Theta$ denote a finite group which acts on $G$ by continuous automorphisms. We denote the action by upper left exponents.
Consider the first non-abelian cohomology set $H^1(\Theta, G)$. Recall that a $1$-cocycle for $\Theta$ with values in $G$ is
a function $a: \Theta \to G$ (with notation $a(s) = a_s$) such that $a_{st} = a_s \up{s}a_t$ for all $s,t \in \Theta$.
Two cocycles $a$ and $b$ are \emph{cohomologous},
 if there is some element $g \in G$ such that $a_s = g^{-1}b_s \up{s}g$ for all $s \in \Theta$.

\begin{lemma}\label{lem:H1vanishes}
  If the order of $\Theta$ is prime to $p$, then
  $H^1(\Theta,G) = \{1\}$.
\end{lemma}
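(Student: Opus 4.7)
The plan is to reduce to the finite case by an inverse limit argument and then invoke the Schur-Zassenhaus theorem.

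First I would express $G$ as an inverse limit of finite $p$-group quotients with $\Theta$-action. Since $\Theta$ is finite and acts continuously, for any open normal subgroup $N \normal G$ the intersection $N' := \bigcap_{\theta\in\Theta} \theta(N)$ is still open and normal, and it is now $\Theta$-stable. The family $\mathcal{N}$ of all $\Theta$-stable open normal subgroups is therefore cofinal among open normal subgroups, so $G = \varprojlim_{N \in \mathcal{N}} G/N$ as topological groups with $\Theta$-action, and each $G/N$ is a finite $p$-group.

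Next I would show triviality modulo each $N$. Given a 1-cocycle $a : \Theta \to G$, reduce modulo $N \in \mathcal{N}$ to obtain a cocycle $\overline{a} : \Theta \to G/N$. Such a cocycle corresponds to a complement of $G/N$ in the semidirect product $(G/N) \rtimes \Theta$. Since $\gcd(|G/N|, |\Theta|) = 1$ (as $|G/N|$ is a power of $p$ and $|\Theta|$ is prime to $p$) and $G/N$ is solvable, the Schur-Zassenhaus theorem guarantees that all such complements are $G/N$-conjugate. Hence $H^1(\Theta, G/N) = \{1\}$, so there exists $g_N \in G/N$ with $\overline{a}_s = g_N^{-1}\,\up{s}g_N$ for every $s \in \Theta$.

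Finally I would assemble these $g_N$ into a single element of $G$ by a compactness argument. For each $N \in \mathcal{N}$ let
\begin{equation*}
  T_N := \{\, g \in G \,\mid\, a_s \equiv g^{-1}\up{s}g \pmod{N} \text{ for all } s \in \Theta \,\}.
\end{equation*}
Each $T_N$ is non-empty by the previous step and is closed in $G$ (the defining condition is an intersection over $s \in \Theta$ of preimages of closed sets under continuous maps). The family $\{T_N\}_{N \in \mathcal{N}}$ is filtered: $T_N \cap T_{N'} \supseteq T_{N \cap N'}$. Since $G$ is compact, the intersection $\bigcap_{N \in \mathcal{N}} T_N$ is non-empty, and any element $g$ in it satisfies $a_s = g^{-1}\up{s}g$ for every $s$, because $\bigcap_{N \in \mathcal{N}} N = \{1\}$. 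Thus $a$ is cohomologous to the trivial cocycle, and $H^1(\Theta, G) = \{1\}$.

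The only real subtlety is the passage from finite quotients to $G$ in the non-abelian setting, where one cannot simply invoke exactness of inverse limits; this is handled by the compactness/filtered-intersection argument in the last step. Everything else is a direct appeal to Schur-Zassenhaus.
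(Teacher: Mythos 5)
Your argument is correct, and its skeleton --- triviality for the finite quotients, then a compactness / finite-intersection-property argument to pass to the pro-$p$ group --- is the same as the paper's; your filtered family $\{T_N\}$ is literally the paper's family $\{K(N)\}$, since for $N$ normal the conditions $g a_s \up{s}g^{-1}\in N$ and $a_s \equiv g^{-1}\up{s}g \pmod N$ coincide. The one genuine difference is how the finite case is established. You appeal to the Schur--Zassenhaus theorem, translating cocycles into complements of the normal Hall subgroup $G/N$ inside $(G/N)\rtimes\Theta$ and using solvability of the $p$-group $G/N$ to get conjugacy of complements. The paper instead takes the elementary abelian case as immediate (one can average over $\Theta$, since $|\Theta|$ is invertible in $\bbF_p$) and then inducts on $|G|$ via the Frattini quotient. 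Your route is a cleaner appeal to a named structure theorem and makes the non-abelian character of the statement transparent; the paper's induction is more self-contained. Both are valid, and the decisive step --- the compactness argument --- is shared.
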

\begin{proof}
   The claim is obvious if $G$ is a finite elementary abelian $p$-group. 
   If $G$ is a finite $p$-group, the claim can be proven by induction on the order of $G$.
   Suppose now that $G$ is a general pro-$p$ group and let $a$ be some $1$-cocycle with values in~$G$.
   For every open normal $\Theta$-stable subgroup $N \normal G$ we have
   $H^1(\Theta, G/N) = \{1\}$. Therefore the set
   \begin{equation*}
       K(N) = \{\:g \in G \:|\: \forall s\in \Theta \quad g a_s \up{s}g^{-1} \in N\:\}
   \end{equation*}
   is a \emph{non-empty} open compact subset of $G$.
   Since $G$ is compact the intersection of all the sets $K(N)$ is non-empty,
   where $N$ runs through the collection of open normal $\Theta$-stable subgroups. 
   Hence there is some element $g \in G$ such that $g a_s \up{s}g^{-1} \in N$ for all such $N$, and we deduce 
   $g a_s \up{s}g^{-1} = 1$.
\end{proof}

We will frequently need this lemma, in particular it is the main tool in the proof of the following fundamental proposition.

\begin{proposition}\label{prop:uniform}
 Let $G$ be a uniform pro-$p$ group.
 If $p$ does not divide the order of $\Theta$, then the fixed point group $G^\Theta$ is a uniform pro-$p$ group.
 Moreover we have $P_i(G^\Theta) = P_i(G)^\Theta$ for all $i \geq 1$.
 In particular,  $\dim_{\bbF_p} \bigl(G/\Phi(G)\bigr)^\Theta = \dim( G^\Theta )$.
\end{proposition}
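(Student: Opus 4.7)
The plan is to combine two features of a uniform pro-$p$ group $G$ with Lemma~\ref{lem:H1vanishes}. Since each $P_i(G)$ is characteristic, it is $\Theta$-stable, and the short exact sequence of $\Theta$-groups
\begin{equation*}
    1 \to P_{i+1}(G) \to P_i(G) \to V_i \to 1, \qquad V_i := P_i(G)/P_{i+1}(G),
\end{equation*}
gives a pointed exact sequence terminating in $H^1(\Theta,P_{i+1}(G))$, which vanishes by Lemma~\ref{lem:H1vanishes}. Thus for every $i$ the sequence
\begin{equation*}
    1 \to P_{i+1}(G)^\Theta \to P_i(G)^\Theta \to V_i^\Theta \to 1
\end{equation*}
is exact. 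The $\Theta$-equivariant $p$-th power map induces $\bbF_p[\Theta]$-isomorphisms $V_i \cong V_{i+1}$, so $d := \dim_{\bbF_p} V_i^\Theta$ is independent of $i$, and hence $|P_i(G)^\Theta : P_{i+1}(G)^\Theta| = p^d$ throughout.

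Next I would exploit the fact that on a uniform pro-$p$ group the $p$-th power map is a bijection from $G$ onto $P_2(G)$. For $x \in P_2(G)^\Theta$ with unique $p$-th root $g \in G$ and any $\theta \in \Theta$, one has $\theta(g)^p = \theta(x) = x = g^p$, forcing $\theta(g) = g$. Therefore the set of $p$-th powers of elements of $G^\Theta$ equals $P_2(G)^\Theta$, which is in particular already a closed subgroup of $G^\Theta$. Consequently $[G^\Theta, G^\Theta] \subseteq [G,G] \cap G^\Theta \subseteq P_2(G) \cap G^\Theta = P_2(G)^\Theta = \overline{(G^\Theta)^p}$ (using that $G$ is powerful when $p$ is odd, and its analog with fourth powers when $p=2$), so $G^\Theta$ is powerful; combined with the obvious torsion-freeness and the resulting identity $\Phi(G^\Theta) = \overline{[G^\Theta,G^\Theta](G^\Theta)^p} = P_2(G)^\Theta$ of index $p^d$, we conclude that $G^\Theta$ is a uniform pro-$p$ group of dimension $d$.

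Finally, the identity $P_i(G^\Theta) = P_i(G)^\Theta$ follows by induction, with $i=1$ trivial. Since each $P_i(G)$ is itself a $\Theta$-stable uniform pro-$p$ group, the previous paragraph applied to $P_i(G)$ in place of $G$ yields $\Phi(P_i(G)^\Theta) = P_{i+1}(G)^\Theta$, so $P_{i+1}(G^\Theta) = \Phi(P_i(G^\Theta)) = \Phi(P_i(G)^\Theta) = P_{i+1}(G)^\Theta$. The dimension formula is then automatic: $\dim(G^\Theta) = \dim_{\bbF_p} G^\Theta/\Phi(G^\Theta) = d = \dim_{\bbF_p} V_1^\Theta = \dim_{\bbF_p}(G/\Phi(G))^\Theta$. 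The main delicate point is the apparent circularity in simultaneously proving that $G^\Theta$ is powerful, finitely generated, and satisfies $\Phi(G^\Theta) = P_2(G)^\Theta$; the injectivity of the $p$-th power map on $G$ breaks the circle by identifying the set of $p$-th powers in $G^\Theta$ with $P_2(G)^\Theta$ \emph{before} any structural statement about $G^\Theta$ is invoked.
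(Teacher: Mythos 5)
Your proof is correct, and its engine is genuinely different from the paper's. Both arguments open the same way, using Lemma~\ref{lem:H1vanishes} to obtain the exact sequences $1 \to P_{i+1}(G)^\Theta \to P_i(G)^\Theta \to \bigl(P_i(G)/P_{i+1}(G)\bigr)^\Theta \to 1$; but the paper then decomposes $G/\Phi(G)$ into $\bbF_p[\Theta]$-isotypic components, lifts a basis of the trivial component to fixed elements $g_1,\dots,g_r$, and uses the product/coordinate description of uniform groups (3.7 in \cite{DdSMS1991}) together with the isomorphisms $G/\Phi(G) \cong P_k(G)/P_{k+1}(G)$ to prove the Claim $G^\Theta=\overline{\langle g_1\rangle}\cdots\overline{\langle g_r\rangle}$ and $(G^{p^k})^\Theta=(G^\Theta)^{p^k}$, from which powerfulness and uniformity follow. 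You replace that generator construction by the single standard fact that $x\mapsto x^p$ is a bijection of $G$ onto $P_2(G)$ for uniform $G$ (Lemma~4.10 in \cite{DdSMS1991}; do cite it), so that $\Theta$-fixed elements of $P_2(G)$ have $\Theta$-fixed $p$-th roots; this gives $\Phi(G^\Theta)=\Phi(G)^\Theta$, powerfulness, and, by induction over the uniform subgroups $P_i(G)$, the identity $P_i(G^\Theta)=P_i(G)^\Theta$, with no choice of basis. Your route is shorter and cleanly isolates where coprimality is used: only through Lemma~\ref{lem:H1vanishes}, i.e.\ for the surjectivity onto the fixed quotients and hence for the index count $p^d$ and the dimension formula, while the structural statements come out of the $p$-th-root argument alone; the paper's route, in exchange, yields an explicit topological generating set of $G^\Theta$ adapted to the $\Theta$-action. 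Two details you should write out rather than gesture at: the $p=2$ case (powerful there means $[G^\Theta,G^\Theta]\subseteq\overline{(G^\Theta)^4}$, so you need $[G,G]\subseteq\overline{G^4}=P_3(G)$ and the fourth-power bijection $G\to P_3(G)$), and the passage from ``finitely generated, powerful, torsion-free'' to uniform, which should either cite Thm.~4.5 in \cite{DdSMS1991} or be replaced by observing that your constant indices $|P_i(G^\Theta):P_{i+1}(G^\Theta)|=p^d$ give uniformity directly from the definition.
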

\begin{proof}
   Consider the short exact sequence of groups 
   \begin{equation*}
      1 \longrightarrow \Phi(G) \longrightarrow G \longrightarrow G/\Phi(G) \longrightarrow 1.
   \end{equation*}
   Since $p \nmid |\Theta|$, we have $H^1(\Theta, \Phi(G)) = \{1\}$ and thus the sequence
   \begin{equation}\label{eq:seqXi}
      1 \longrightarrow \Phi(G)^\Theta \longrightarrow G^\Theta \longrightarrow \bigl(G/\Phi(G)\bigr)^\Theta \longrightarrow 1
   \end{equation}
   is exact. Note that $G/\Phi(G)$ is a finite dimensional $\bbF_p[\Theta]$-module which decomposes into isotypic components.
   By \eqref{eq:seqXi} we can find elements $g_1, \dots, g_r \in G^\Theta$ which form an $\bbF_p$-basis of the $1$-isotypic component in 
   $G/\Phi(G)$.
   We choose elements $u_1, \dots, u_n$ in $G$ which form a basis of the other isotypic components, in particular
   \begin{equation*}
       g_1, \dots, g_r, u_1, \dots, u_n
   \end{equation*}
    is an $\bbF_p$-basis of $G/\Phi(G)$.

    \textbf{Claim:} $G^\Theta  = \overline{\langle g_1\rangle}\cdots\overline{\langle g_r \rangle}$. \\
    In particular, this means that $G^\Theta$ is topologically generated by $g_1,\dots, g_r$.
    To prove the claim, we take some $g \in G^\Theta$. By 3.7 in \cite{DdSMS1991} we can write
    \begin{equation*}
        g = g_1^{\lambda_1} \cdots g_r^{\lambda_r} u_1^{\mu_1} \cdots u_n^{\mu_n}
    \end{equation*}
    for certain $p$-adic integers $\lambda_i, \mu_j \in \bbZ_p$.
    Since $g\Phi(G)$ is a fixed point in $G/\Phi(G)$, we see that the $\mu_j$ are all divisible by $p$.
    Further, all $g_i$ are fixed under $\Theta$, hence the element 
     $g' := u_1^{\mu_1} \cdots u_n^{\mu_n}$ is also in $G^\Theta$. Suppose there is some $j$ with $\mu_j \neq 0$.
    Then we can find an integer $k$ such that $g' \in P_{k}(G) \setminus P_{k+1}(G)$.
    However, the map $x \mapsto x^{p^{k-1}}$ defines an isomorphism of $\bbF_p[\Theta]$-modules 
   \begin{equation*}
       G/\Phi(G) \isomorph P_k(G)/P_{k+1}(G)
   \end{equation*}
    (here we use that $G$ is a uniform pro-$p$ group).
    So, by construction, $g'$ does not lie in the $1$-isotypic component since it is an element of the span of
     $u_1^{p^{k-1}}, \dots, u_n^{p^{k-1}}$. We deduce that $g' = 0$ and thus
    $G^\Theta = \overline{\langle g_1, \dots, g_r \rangle} = \overline{\langle g_1\rangle}\cdots\overline{\langle g_r \rangle}$.

      Given a pro-$p$ group $H$ we denote the subgroup generated by the $p^k$-th powers in $H$ by $H^{p^k}$.
     As a next step we show that $(G^\Theta)^{p^k} = (G^{p^k})^\Theta$ for all $k \geq 1$. This implies, in particular, that $(G^\Theta)^{p^k}$ is closed.
     The inclusion $(G^\Theta)^{p^k} \subseteq (G^{p^k})^\Theta$ is obvious. Conversely, let
     $g \in (G^{p^k})^\Theta$. By the Claim, applied to the group $P_{k+1}(G) = G^{p^k}$, we have
     $(G^{p^k})^\Theta = \overline{\langle g^{p^k}_1\rangle}\cdots\overline{\langle g^{p^k}_r \rangle} \subseteq (G^\Theta)^{p^k}$. 

     Finally we can show that $G^\Theta$ is a uniform pro-$p$ group. By the Claim the group $G^\Theta$ is topologically finitely generated.
     We show that $G^\Theta$ is powerful. To show this we assume $p\neq 2$, since the argument for $p=2$ is analogous.
     Since $\Phi(G) = G^p$, the exact sequence \eqref{eq:seqXi}, implies that $G^\Theta/(G^\Theta)^p$ is abelian.
     Now we know that $G^\Theta$ is powerful, thus we have $P_k(G)^\Theta = P_k(G^\Theta)$.
     Moreover, the short exact sequence
    \begin{equation*}
          1 \longrightarrow P_{k+1}(G) \longrightarrow P_k(G) \longrightarrow G/\Phi(G) \longrightarrow 1
    \end{equation*}
    yields the short exact sequence
     \begin{equation*}
          1 \longrightarrow P_{k+1}(G^\Theta) \longrightarrow P_k(G^\Theta) \longrightarrow \bigl(G/\Phi(G)\bigr)^\Theta \longrightarrow 1
    \end{equation*}
    by Lemma \ref{lem:H1vanishes}. Consequently, $G^\Theta$ is uniform and $ \dim_{\bbF_p} \bigl(G/\Phi(G)\bigr)^\Theta$ 
    is the dimension of~$G^\Theta$.
\end{proof}

\section{Proof of the main result}\label{sec:proof}
In this section we prove the main result. Let $p$ and $\ell$ be two distinct prime numbers and
let $\Gamma$ be a group of type \typ{VFP}.
Recall that $\Gamma$ is assumed to be a dense subgroup of a compact $p$-adic analytic group $G$.
Let $\Theta$ be a finite $\ell$-group of continuous automorphisms of $G$ and assume that $\Gamma$ is $\Theta$-stable.
For reasons of completeness we prove the following basic lemma.
\begin{lemma}\label{lem:exU}
 The $p$-adic analytic group $G$ has an open, normal, $\Theta$-stable, uniform pro-$p$ subgroup $U$ such that $U \cap \Gamma$
 is torsion-free. 
\end{lemma}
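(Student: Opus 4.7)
I would prove the lemma by first producing an open pro-$p$ subgroup of $G$ with the correct invariance properties, and then passing to a deep enough term of its lower $p$-series in order to gain uniformity. The key inputs from the theory of $p$-adic analytic groups (DdSMS, Chapters~4 and~8) are: (a) every compact $p$-adic analytic group has an open pro-$p$ subgroup; (b) for any finitely generated pro-$p$ group $V$, the subgroup $P_n(V)$ is uniform for all sufficiently large $n$; and (c) uniform pro-$p$ groups are torsion-free.

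\textbf{Step 1 (open, normal, $\Theta$-stable, pro-$p$).} Choose any open pro-$p$ subgroup $V_0 \subseteq G$. Because $G$ is compact and $V_0$ is open, $V_0$ has only finitely many $G$-conjugates, and $\Theta$ is finite, so
\begin{equation*}
   V_1 := \bigcap_{g \in G,\, \theta \in \Theta} \theta(g V_0 g^{-1})
\end{equation*}
is a finite intersection of open subgroups, hence open in $G$. By construction $V_1$ is normal in $G$ and stable under $\Theta$. Since $V_1 \subseteq V_0$ is a closed subgroup of a pro-$p$ group, $V_1$ is itself pro-$p$; being open in the $p$-adic analytic group $G$ it is also topologically finitely generated.

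\textbf{Step 2 (uniform).} Apply the standard result that $P_n(V_1)$ is uniform for all sufficiently large $n$, and fix such an $n$. Define $U := P_n(V_1)$. As a finite-index subgroup of the open subgroup $V_1$, the group $U$ is open in $G$. The lower $p$-series consists of characteristic subgroups, so $U$ is characteristic in $V_1$; combined with the normality and $\Theta$-stability of $V_1$ obtained in Step~1, this yields $U \normal G$ and $\theta(U) = U$ for all $\theta \in \Theta$. Finally, uniform pro-$p$ groups are torsion-free, so $U$ (and a fortiori $U \cap \Gamma$) is torsion-free.

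\textbf{Main obstacle.} There is no serious obstacle; the lemma is essentially bookkeeping built around standard pro-$p$ facts. The only delicate point is the ordering of the two modifications: after the intersection in Step~1 we have lost uniformity, so we recover it by descending along the lower $p$-series, relying on the fact that characteristic subgroups remain normal in $G$ and $\Theta$-stable. Attempting the steps in the reverse order (starting from a uniform subgroup and then intersecting conjugates) would destroy uniformity and force us back into the same truncation argument anyway.
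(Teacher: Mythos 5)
Your Step 1 and the formal bookkeeping in Step 2 are fine (characteristic in $V_1$ plus $V_1$ normal in $G$ and $\Theta$-stable does give normality in $G$ and $\Theta$-stability), but the key input (b) is not a theorem: it is false that for an arbitrary finitely generated pro-$p$ group $V$ the terms $P_n(V)$ are eventually uniform. Since $P_n(V)$ is open in $V$, if some $P_n(V)$ were uniform then $V$ would have finite rank; so for instance no term of the lower $p$-series of a nonabelian free pro-$p$ group is ever uniform. The standard result in DdSMS, Ch.~4, which you are implicitly invoking, carries the hypothesis that the group is \emph{powerful}: if $V$ is a finitely generated powerful pro-$p$ group, then $P_n(V)$ is uniform for all sufficiently large $n$. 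Your $V_1$, obtained by intersecting conjugates and $\Theta$-images of an arbitrary open pro-$p$ subgroup, does have finite rank (it is open in the compact $p$-adic analytic group $G$), but there is no reason for it to be powerful, and you offer no argument that its lower $p$-series eventually becomes uniform. That assertion may well hold for finite-rank pro-$p$ groups, but it is not the quotable fact you cite and would itself need a proof; as written, this is a genuine gap at the crucial step.

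The paper sidesteps the issue with a shorter trick: it applies the cited theorem (every compact $p$-adic analytic group contains an open, normal, uniform pro-$p$ subgroup; 9.36 in DdSMS) not to $G$ but to the compact $p$-adic analytic group $G \rtimes \Theta$. This produces $U_0$ open, normal and uniform in $G\rtimes\Theta$ in one stroke; normality in the semidirect product simultaneously gives normality in $G$ and $\Theta$-stability, and $U_0 \subseteq G$ because the finite $p$-group $U_0/(U_0\cap G)$ embeds into the $\ell$-group $\Theta$ and is therefore trivial (this is where $\ell \neq p$ enters). Torsion-freeness of $U=U_0$, hence of $U\cap\Gamma$, is then the standard property of uniform groups, as in your (c). If you prefer to keep your two-step architecture, you must either prove the finite-rank version of your claim (b) or arrange the intermediate subgroup to be powerful before descending along its lower $p$-series; since intersections of conjugates do not preserve powerfulness, the semidirect-product application of the DdSMS theorem is the natural repair.
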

\begin{proof}
The compact p-adic analytic group $G \semidirect \Theta$ contains an open, normal, uniform pro-$p$ subgroup $U_0$
(see 9.36 in \cite{DdSMS1991}).
Define $U = U_0 \cap G$. The quotient $U_0/U$ is a finite $p$-group which maps injectively into the finite $\ell$-group $\Theta$. We conclude that 
$U/U_0 = \{1\}$. Thus $U = U_0$ is an open, normal, $\Theta$-stable, uniform pro-$p$ subgroup of~$G$.
Finally, $U$ is torsion-free (by 4.8 in \cite{DdSMS1991}), in particular $\Gamma \cap U$ is torsion-free. 
\end{proof}

We fix a subgroup $U \normal G$ as in Lemma \ref{lem:exU} and we consider the 
associated $p$-adic analytic tower
\begin{equation*}
    \Gamma_i := \Gamma \cap P_i(U).
\end{equation*}
We shall now prove Theorem \ref{thm:modGrowth} stated in the introduction.
\begin{proof}[Proof of Theorem \ref{thm:modGrowth}]
   The groups $\Gamma_n$ are torsion-free and so, since $\Gamma$ was assumed to be of type \typ{VFP},
   we find that $\Gamma_n$ is of type \typ{FP}. In particular the sums in the statement of the theorem are finite and well-defined.
   Given a $1$-cocycle $c$ of $\Theta$ with values in $\Gamma_n$, we can twist the action of $\Theta$ on $\Gamma_n$ by $c$.
   More precisely, $s \in \Theta$ acts by $\up{s|c}\gamma := c_s \up{s}\gamma c_s^{-1}$.
   The fixed points of the $c$-twisted action will be denoted by $\Gamma_n^{\Theta|c}$.  
   Note that for cohomologous cocycles $a$ and $b$ the fixed point groups of the corresponding twisted actions are isomorphic.
   The Smith theory argument in the introduction yields a
    slight variation of a theorem of Adem:
   \begin{equation}\label{eq:Adem}
      \sum_{i=r}^\infty \dim_{\bbF_\ell} H^i(\Gamma_n, \bbF_\ell) \geq \sum_{[c] \in H^1(\Theta, \Gamma_n)} \sum_{i=r}^\infty \dim_{\bbF_\ell} H^i(\Gamma^{\Theta|c}_n, \bbF_\ell).
   \end{equation}
   This is essentially Adem's Thm.~3.3 in \cite{Adem1996}. However, Adem only proves the case $r=0$, the 
   statement for $r > 0$ is obtained via a modification in his Thm.~1.5.
   One simply has to replace the Smith theory applied by Adem with the finer Theorem~4.4 of Floyd \cite{Floyd1952}
   (or Thm.~4.1 in \cite[Ch.III]{Bredon1972}).

   Let $c \in Z^1(\Theta, \Gamma_n)$ be a cocycle whose cohomology class $[c]$ lies in the kernel of the natural map
   $h_n: H^1(\Theta, \Gamma_n) \to H^1(\Theta, \Gamma_1)$. This means there is an element $a \in \Gamma_1$ such that
   \begin{equation*}
        c_s = a^{-1} \up{s}a \quad\text{ for all } s \in \Theta.
   \end{equation*}
    The inner automorphism $\inn(a): \Gamma \to \Gamma$ stabilizes the normal subgroups $\Gamma_j$ and satisfies
    \begin{equation*}
       \inn(a)(\up{s|c}\gamma) = ac_s\up{s}\gamma c_s^{-1}a^{-1} = \up{s}\inn(a)(\gamma)
    \end{equation*}
    for all $s \in \Theta$ and $\gamma \in \Gamma$.
    Consequently, $\Gamma_n^{\Theta|c} \cong \Gamma_n^\Theta$.
    From Adem's inequality \eqref{eq:Adem} we obtain
    \begin{align*}
        \sum_{i=r}^\infty \dim_{\bbF_\ell} H^i(\Gamma_n, \bbF_\ell) &\geq
      \sum_{[c] \in H^1(\Theta, \Gamma_n)} \sum_{i=r}^\infty \dim_{\bbF_\ell} H^i(\Gamma^{\Theta|c}_n, \bbF_\ell)\\
      &\geq \left|\ker(h_n)\right|\:\sum_{i=r}^\infty \dim_{\bbF_\ell} H^i(\Gamma^{\Theta}_n, \bbF_\ell) . 
    \end{align*}
    As a next step we determine the cardinality of $\left|\ker(h_n)\right|$.
    Since $\Gamma_1/\Gamma_n$ is isomorphic to $U/P_n(U)$, we obtain an exact sequence of pointed sets
    \begin{equation*}
        0 \longrightarrow \Gamma_n^\Theta \longrightarrow \Gamma_1^\Theta \longrightarrow \bigl(U/P_n(U)\bigr)^\Theta \longrightarrow \ker(h_n) \longrightarrow 0.
    \end{equation*}
    By Proposition \ref{prop:uniform} and Lemma \ref{lem:H1vanishes} we obtain $\bigl(U/P_n(U)\bigr)^\Theta \cong U^\Theta / P_n(U^\Theta)$ and we deduce 
    \begin{equation*}
     \left|\ker(h_n)\right| = [\Gamma_1^\Theta:\Gamma^\Theta_n]^{-1} \bigl|\bigl(U/P_n(U)\bigr)^\Theta\bigr| = [\Gamma_1^\Theta:\Gamma^\Theta_n]^{-1} p^{(n-1)\dim(G^\Theta)}.
    \end{equation*}
    Finally we conclude
    \begin{align*}
        \sum_{i=r}^\infty \dim_{\bbF_\ell} H^i(\Gamma_n, \bbF_\ell) &\gg
         [\Gamma_1^\Theta:\Gamma^\Theta_n]^{\alpha-1} p^{(n-1)(\dim(G^\Theta)-\alpha\lambda)}\\ &\gg p^{(n-1)\alpha(\dim(G^\Theta)-\lambda)} 
       \: = \: [\Gamma_1:\Gamma_n]^{d\alpha}.
    \end{align*}
    Here we use $\alpha \leq 1$ and the inequality $[\Gamma_1^\Theta:\Gamma^\Theta_n] \leq |\bigl(U/P_n(U)\bigr)^\Theta\bigr|$.
\end{proof}

\section{Cohomology with rational coefficients} \label{sec:rational}

Let $\Gamma$ be a torsion-free arithmetic group and consider the rational cohomology groups $H^*(\Gamma,\bbQ)$.
In fact,  $\dim_\bbQ H^i(\Gamma,\bbQ) \leq \dim_{\bbF_\ell} H^i(\Gamma,\bbF_\ell)$ for every prime number~$\ell$. 
It is therefore a natural question to ask whether
the cohomology growth predicted by Theorem \ref{thm:modGrowth} (and the given applications) is already valid for cohomology with rational coefficients.
If this does not hold, then one could deduce that the cohomology groups $H^*(\Gamma,\bbZ)$ contain many torsion elements of order $\ell$.
However, we expect that for arithmetic groups the growth rates should already be valid for the cohomology with rational coefficients.
In this section we
prove a result in this direction under certain conditions. In particular, $\Theta$ needs to be a finite cyclic group.
The underlying powerful tool is Rohlfs' method for the computation of Lefschetz numbers (cf.~\cite{Rohlfs1990}).

It is convenient to make the following definition.
\begin{definition}
  Let $H$ be a reductive linear algebraic $\bbQ$-group.
  We say that $H$ has the \emph{non-vanishing property} if 
  for some (and hence for all) torsion-free arithmetic subgroup $\Gamma \subseteq H(\bbQ)$ the
  Euler characteristic $\chi(\Gamma)$ is non-zero.
\end{definition}
For instance $H$ has the non-vanishing property if one of the following holds
\begin{enumerate}
 \item $H(\bbR)$ is semi-simple and has DS.
 \item $H(\bbR)$ is compact.
 \item $H$ is a $\bbQ$-split torus.
\end{enumerate}
The non-vanishing property is stable under isogenies, 
since isogenies map torsion-free arithmetic groups isomorphically to arithmetic groups (see Thm.~8.9 in \cite{Borel1969}).
 Moreover, it is preserved under direct products.
For example this shows that $\GL_2 / \bbQ$ has the non-vanishing property.

Let $G$ be a semi-simple connected linear algebraic
$\bbQ$-group and let $\tau$ be a $\bbQ$-rational automorphism of $G$ of finite order.

\begin{theorem}\label{thm:Qgrowth}
  Let $p$ be a prime number which does not divide the order of $\tau$.
  Suppose that the simply connected covering of $G$ has strong approximation and assume
  that the fixed point group $G^\tau$ has the non-vanishing property. 
  Then every arithmetic subgroup $\Gamma \subset G(\bbQ)$ contains a cofinal $p$-tower 
  $(\Gamma_n)_{n=1}^\infty$ such that
   \begin{equation*}
       \sum_{i=0}^\infty\dim H^i(\Gamma_n, \bbQ) \gg [\Gamma:\Gamma_n]^{d}
   \end{equation*}
   with $d = \frac{\dim G^\tau}{\dim G}$ as $n$ tends to infinity.
\end{theorem}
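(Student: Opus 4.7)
The plan is to bound $\sum_i \dim H^i(\Gamma_n, \bbQ)$ from below by the absolute value of the Lefschetz number
$L(\tau, \Gamma_n) := \sum_{i \geq 0}(-1)^i \mathrm{tr}\bigl(\tau^* \bigm| H^i(\Gamma_n, \bbQ)\bigr)$,
and to evaluate the latter via Rohlfs' formula. I construct the tower $(\Gamma_n)$ exactly as in the proof of Theorem~\ref{thm:modGrowth}: setting $\Theta := \langle \tau\rangle$ (cyclic of order coprime to $p$), I use strong approximation of $\widetilde{G}$ and Lemma~\ref{lem:openclosure} to see that the closure of $\Gamma$ in $G(\bbQ_p)$ is open, apply Lemma~\ref{lem:exU} to extract an open, normal, $\Theta$-stable, uniform pro-$p$ subgroup $U$, and put $\Gamma_n := \Gamma \cap P_n(U)$. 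Each $\Gamma_n$ is then $\Theta$-stable, torsion-free, and of type $\typ{FP}$, so $L(\tau,\Gamma_n)$ makes sense and satisfies $|L(\tau,\Gamma_n)| \leq \sum_i \dim H^i(\Gamma_n,\bbQ)$.

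Rohlfs' Lefschetz formula \cite{Rohlfs1990} gives
\begin{equation*}
  L(\tau, \Gamma_n) = \sum_{[c] \in H^1(\Theta, \Gamma_n)} \chi(\Gamma_n^{\Theta|c}).
\end{equation*}
As in the proof of Theorem~\ref{thm:modGrowth}, for any $[c]$ in the kernel of $h_n : H^1(\Theta,\Gamma_n)\to H^1(\Theta,\Gamma_1)$ the twisted fixed-point group $\Gamma_n^{\Theta|c}$ is conjugate inside $\Gamma_1$ to $\Gamma_n^\Theta$, so $\chi(\Gamma_n^{\Theta|c}) = \chi(\Gamma_n^\Theta) = [\Gamma_1^\Theta:\Gamma_n^\Theta]\,\chi(\Gamma_1^\Theta)$, which is non-zero by the non-vanishing property of $G^\tau$. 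Proposition~\ref{prop:uniform} together with Lemma~\ref{lem:H1vanishes} yields $|\ker(h_n)| = [\Gamma_1^\Theta:\Gamma_n^\Theta]^{-1}\,p^{(n-1)\dim G^\tau}$, so the kernel alone contributes $p^{(n-1)\dim G^\tau}\,\chi(\Gamma_1^\Theta)$ to Rohlfs' sum. Since $[\Gamma:\Gamma_n] \asymp p^{(n-1)\dim G}$, this is exactly of order $[\Gamma:\Gamma_n]^d$ with $d = \dim G^\tau / \dim G$.

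The main obstacle is to rule out cancellation among the remaining summands of Rohlfs' sum: a priori the non-kernel classes $[c]$ contribute Euler characteristics of arithmetic subgroups of the $\bbQ$-groups $G^{\tau|c}$, which could have opposite sign and annihilate the kernel contribution. Because $\Theta$ is cyclic (this is where the hypothesis is essential), each $G^{\tau|c}$ is an inner $\bbQ$-form of $G^\tau$, so $G^{\tau|c}(\bbR)$ is an inner $\bbR$-form of $G^\tau(\bbR)$ preserving the real rank and the dimension of the associated symmetric space. The non-vanishing property therefore extends to every $G^{\tau|c}$, and a Harder-type sign formula for $\chi$ of torsion-free arithmetic subgroups of reductive groups (which factors through the compact, DS, and split-torus components singled out in the definition) shows that every summand $\chi(\Gamma_n^{\Theta|c})$ has the same sign as $\chi(\Gamma_1^\Theta)$. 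Consequently,
\begin{equation*}
   \sum_i \dim H^i(\Gamma_n, \bbQ) \geq |L(\tau, \Gamma_n)| \geq |\ker(h_n)|\cdot|\chi(\Gamma_n^\Theta)| \gg [\Gamma:\Gamma_n]^d,
\end{equation*}
as required. Establishing this uniform-sign statement is where I expect the essential work to lie, and is precisely the point at which the cyclicity of $\tau$ and the non-vanishing assumption on $G^\tau$ together do the heavy lifting.
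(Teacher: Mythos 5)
Your overall strategy (bound the Betti number sum by $|\calL(\tau,\Gamma_n)|$, evaluate via Rohlfs' formula, and isolate the $\ker(h_n)$-contribution, which grows like $p^{(n-1)\dim G^\tau}$) is exactly the paper's, and that part of your computation is correct. The genuine gap is in the step you yourself flag as the heart of the matter: ruling out cancellation from the non-kernel classes. Your justification is wrong on two counts. First, cyclicity of $\langle\tau\rangle$ does not make the twisted groups $G^{\tau|c}$ inner $\bbQ$-forms of $G^\tau$; in the motivating base-change situation $G=\Res_{E/\bbQ}(G_0)$ with $\tau$ a Galois involution, $G^\tau=G_0$ while the $G^{\tau|c}$ are typically outer forms (e.g.\ unitary groups attached to Hermitian forms of various signatures). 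Second, even genuine inner $\bbQ$-forms do not preserve the real form, the real rank, or $q_H=\dim H_s(\bbR)/K_H^s$: definite and indefinite quaternionic forms of $\SL_2$ are inner forms of one another, yet their torsion-free arithmetic subgroups have Euler characteristics of opposite sign, since by Harder's Gau{\ss}--Bonnet theorem the sign is $(-1)^{q_H/2}$ and $q_H$ changes. So ``inner form $\Rightarrow$ same sign'' fails, and your uniform-sign claim does not follow from what you wrote.

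The paper closes this gap with an arithmetic input you omit: $\Gamma_1$ is not taken to be the given $\Gamma$ intersected with $P_n(U)$, but is first shrunk (Rohlfs, Prop.~2.8) to a $\tau$-stable subgroup that is \emph{small enough} in Rohlfs' sense, i.e.\ torsion-free, contained in $G(\bbR)^0$, and such that every class of $H^1(\tau,\Gamma_1)$ becomes trivial in $H^1(\tau,G(\bbQ_q))$ for all primes $q$. This local triviality at the finite places is what makes the sign comparison possible: Lemma~\ref{lem:RohlfsSpehMod4} compares the quadratic spaces $(\LG'_{\tau},B)$ and $(\LG'_{\tau|c},B)$ given by the Killing form, uses Weil's product formula in the Witt ring to force the real signatures to agree mod $8$, and deduces $q_\tau\equiv q_{\tau|c}\bmod 4$, hence $\delta\cdot\chi(\Gamma_n^{\tau|c})\ge 0$ for all classes (vanishing is allowed, which is all one needs for $|\calL(\tau,\Gamma_n)|\ge\sum_{[c]\in\ker(h_n)}|\chi(\Gamma_n^{\tau|c})|$). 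Cyclicity of $\tau$ is needed to have a single automorphism with a Lefschetz number and Rohlfs' formula at all, not to control the signs. Without the small-enough normalization of $\Gamma_1$ and the signature argument, your final inequality is unjustified.
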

\begin{proof}
    Recall that a $\tau$-stable arithmetic subgroup $\Gamma' \subseteq G(\bbQ)$ is
   \emph{small enough} in the sense of Rohlfs \cite[Def.~2.7]{Rohlfs1990} if it is torsion-free, contained in $G(\bbR)^0$, and the image of $H^1(\tau,\Gamma')$
   is trivial in $H^1(\tau,G(\bbQ_q))$ for every prime number $q$.
   
   Choose a subgroup $\Gamma_1 \subseteq \Gamma$ such that the following conditions hold
   \begin{enumerate}
    \item $\Gamma_1$ is $\tau$-stable,
    \item  $\Gamma_1$ is \emph{small enough}, and
    \item the closure of $\Gamma_1$ in $G(\bbQ_p)$ is an open compact uniform pro-$p$ subgroup.
   \end{enumerate}
   It is easy to see that such a group $\Gamma_1$ exists: Take $\Gamma_0$ to be small enough and $\tau$-stable, which is possible due to 
   Prop.~2.8 in \cite{Rohlfs1990}, then consider the closure of $\Gamma_0$ in $G(\bbQ_p)$ and use Lemma \ref{lem:exU}.
   We denote the closure of $\Gamma_1$ in $G(\bbQ_p)$ by $U$. Further we set $\Gamma_i = P_i(U)\cap\Gamma_1$.
   Consider the Lefschetz number of $\tau$ on $\Gamma_n$ defined by
   \begin{equation*}
       \calL(\tau, \Gamma_n) = \sum_{i=0}^\infty (-1)^i \Tr(\tau | H^i(\Gamma_n,\bbQ)).
   \end{equation*}
    For every cocycle $c \in Z^1(\tau, \Gamma_n)$ we define the $c$-twisted $\tau$-action
    on $\Gamma_n$ by $\up{\tau|c}\gamma := c_\tau \up{\tau}\gamma c_\tau^{-1}$ (just as in the proof of Theorem \ref{thm:modGrowth}).
    The fixed points under the $c$-twisted action will be denoted by $\Gamma_n^{\tau|c}$.
     By Rohlfs' method we know that the Lefschetz number is given by a sum of Euler characteristics, namely
    \begin{equation*}
      \calL(\tau, \Gamma_n) = \sum_{[c] \in H^1(\tau,\Gamma_n)} \chi(\Gamma_n^{\tau|c})
    \end{equation*}
    (see \cite{Rohlfs1990} or \cite{RohlfsSpeh1989}).
    In general the occuring Euler characteristics can have different signs, however, it is a result due to Rohlfs-Speh
    that this does not happen when $\Gamma_n$ is \emph{small enough} in their sense (cf.~Lem.~2.5 in \cite{RohlfsSpeh1989}).
    Since we need a slight generalization of their result, we include it as Lemma \ref{lem:RohlfsSpehMod4} below.

    Let $h_n: H^1(\tau, \Gamma_n) \to H^1(\tau,\Gamma_1)$ be the canonical map.
    We can proceed just like in the proof of Theorem~\ref{thm:modGrowth} to obtain:
   \begin{equation*}
       \left|\calL(\tau,\Gamma_n)\right| \geq \sum_{[c] \in \ker(h_n)}  \left|\chi(\Gamma_n^{\tau|c})\right|.
   \end{equation*}
   Note that $G^\tau \cong G^{\tau|c}$ and $\Gamma_n^\tau \cong \Gamma_n^{\tau|c}$ for every class $c \in \ker(h_n)$.
   Since $G^\tau$ has the non-vanishing property, we know that the Euler characteristics are non-zero 
   and they grow linearly with the index $[\Gamma^\tau_1:\Gamma^\tau_n]$.
   The claim follows as in Theorem~\ref{thm:modGrowth}.
\end{proof}

  \begin{remark}
      Using Theorem \ref{thm:Qgrowth} one can deduce an analogue of Theorem~\ref{thm:basechange} 
      for rational cohomology and cyclic base change.
     One can use this to give a short proof of the main theorem of \cite{KSchwermer2012}.
     Note however, that the precise Lefschetz number given in \cite{KSchwermer2012} contains much more information.

     Note that in Theorem~\ref{thm:Qgrowth} there is no parameter $r$ as in Theorem~\ref{thm:modGrowth}.
     This makes it difficult to locate the cohomology growth in a specific degree.
     Moreover, Theorem~\ref{thm:Qgrowth} only works for actions of finite cyclic groups and it involves stronger restrictions
     on the fixed point groups. This makes it impossible to use Theorem~\ref{thm:Qgrowth} recursively, as we did in Example \ref{ex:SL}.
  \end{remark}
 
\begin{example}
   Let $G = \SL_3 / \bbQ$ and let $\tau$ denote the inner automorphism defined by conjugation with
   \begin{equation*}
      I_{2,1}  = \diag(-1,-1,1).
   \end{equation*}
  The fixed point group $G^\tau$ is isomorphic to $\GL_2/\bbQ$. Hence it has the non-vanishing property.
  We find $d = \frac{1}{2}$ since $\dim G = 8$ and $\dim G^\tau = 4$.
  So every arithmetic subgroup of $\SL_3$ contains a cofinal $p$-tower (for every odd prime $p$)
  with some Betti number growing faster than the square root of the index. 
\end{example}

In the remaining part we state and prove the main lemma needed in the proof of Theorem \ref{thm:Qgrowth}. The result is essentially due to Rohlfs-Speh (cf.~Lem.~2.5 in \cite{RohlfsSpeh1989}).
We include a proof since it was only obtained for Cartan-like involutions in \cite{RohlfsSpeh1989}. 
Rohlfs indicated  \cite[(2.9)]{Rohlfs1990} that the 
result holds in general.

The sign of the Euler characteristic of a torsion-free arithmetic subgroup of a reductive algebraic $\bbQ$-group $H$  depends only on $H$.
Recall how to determine the sign.
 We can ignore the centre of $H$ since a finite rank free abelian group has Euler characteristic $0$ or $1$.
 Let $H_{s}$ denote the derived group of $H^0$ and take any maximal compact subgroup $K^s_H$ of $H_{s}(\bbR)$. We define $q_H := \dim H_{s}(\bbR)/K^s_H$.
 The following is an easy consequence of 
 Harder's Gau\ss-Bonnet Theorem: \emph{If 
 $\chi(\Gamma_H) \neq 0$ for some 
 torsion-free arithmetic subgroup $\Gamma_H \subset H(\bbQ)$, then $q_H$ is even and $\chi(\Gamma_H)$ has sign $(-1)^{q_H/2}$ for all $\Gamma_H$} (see \cite{Harder1971}).

As before $G$ denotes a semi-simple connected linear algebraic
$\bbQ$-group and $\tau$ is a $\bbQ$-rational finite order automorphism of $G$.

\begin{lemma}[Rohlfs-Speh]\label{lem:RohlfsSpehMod4}
   Let $b \in Z^1(\tau,G(\bbQ))$ be a cocycle such that $b$ defines the trivial class in $H^1(\tau,G(\bbQ_p))$ for every prime number $p$.
   Assume that $G^\tau$ has the non-vanishing property and let $\delta$ denote the sign of the Euler characteristic of arithmetic subgroups of $G^\tau$.
   Every torsion-free arithmetic subgroup $\Gamma$ of the twisted fixed point group $G^{\tau|b}$ satisfies
   \begin{equation*}
          \delta\cdot\chi(\Gamma) \geq 0.
   \end{equation*}
\end{lemma}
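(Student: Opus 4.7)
If $\chi(\Gamma) = 0$ the inequality is trivial, so assume $\chi(\Gamma) \neq 0$. Applying Harder's Gau\ss-Bonnet theorem to the reductive $\bbQ$-group $G^{\tau|b}$, the integer $q_{G^{\tau|b}}$ is even and $\mathrm{sgn}(\chi(\Gamma)) = (-1)^{q_{G^{\tau|b}}/2}$, while by definition $\delta = (-1)^{q_{G^\tau}/2}$. Hence the desired conclusion is equivalent to the congruence
\begin{equation*}
    q_{G^{\tau|b}} \;\equiv\; q_{G^\tau} \pmod{4},
\end{equation*}
and the rest of the plan is devoted to establishing it.

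The local triviality hypothesis is used to concentrate the difference between the two groups at the archimedean place. For every finite prime $p$ there exists $g_p \in G(\bbQ_p)$ with $b_\tau = g_p^{-1}\up{\tau}g_p$, so conjugation by $g_p$ produces an isomorphism $G^{\tau|b}\times_{\bbQ}\bbQ_p \isomorph G^\tau\times_{\bbQ}\bbQ_p$ of algebraic groups. Thus $G^{\tau|b}$ is an inner twist of $G^\tau$ that is locally trivial at every finite place, and so $q_{G^{\tau|b}} - q_{G^\tau}$ is determined entirely by what happens at infinity. Over $\bbC$ one may similarly write $b_\tau = g_\infty^{-1}\up{\tau}g_\infty$ for some $g_\infty \in G(\bbC)$, and after conjugation by $g_\infty$ the two real groups $G^\tau(\bbR)$ and $G^{\tau|b}(\bbR)$ appear as inner real forms of the same complex reductive group.

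The sign computation is the heart of the proof. I would fix a fundamental $\tau$-stable Cartan subalgebra of $\Lie(G)(\bbR)$ and decompose the real Lie algebras of $G^\tau$ and $G^{\tau|b}$ into $\tau$-equivariant root spaces. Since the two complexifications agree, the integer $q_{G^{\tau|b}} - q_{G^\tau}$ equals twice the number of root spaces whose compact-versus-noncompact type is flipped by the inner twist, so the claim reduces to showing that this number of flipped root spaces is even. The main obstacle is exactly this combinatorial verification. My plan would be to follow the strategy of Rohlfs-Speh \cite{RohlfsSpeh1989} and Rohlfs \cite{Rohlfs1990} (2.9): the set of flipped root spaces carries a natural involution coming from complex conjugation, and the global triviality of $b$ at all finite primes forces the archimedean class of $b$ to be represented via an element of a $\bbQ$-anisotropic-mod-centre torus, producing a second involution on the flipped roots that commutes with complex conjugation and acts without common fixed points. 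The resulting orbit count is then automatically divisible by four, which yields the congruence above. Rohlfs-Speh carry out this bookkeeping explicitly only for Cartan-like involutions, and extending it to a general finite-order $\tau$ -- identifying the correct second involution and verifying the absence of joint fixed points -- is the step I expect to require the most care.
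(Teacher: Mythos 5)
Your reduction of the lemma to the congruence $q_{G^{\tau|b}} \equiv q_{G^\tau} \pmod 4$ via Harder's Gau\ss-Bonnet theorem is exactly the paper's setup, and the observation that local triviality of $b$ at all finite primes concentrates the difference at the archimedean place is also used in the paper (it is what gives $\dim G^{\tau|b} = \dim G^\tau$). But the heart of the lemma is precisely the proof of that congruence, and there your proposal has a genuine gap: you propose to redo the Rohlfs-Speh root-space bookkeeping (counting ``flipped'' compact/noncompact root spaces and producing a second involution so that the count is divisible by four), and you state yourself that identifying this involution and verifying the absence of joint fixed points for a general finite-order $\tau$ ``is the step I expect to require the most care.'' That step is the entire content of the lemma beyond Rohlfs-Speh's Cartan-like case, so as written the crucial argument is missing. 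The sketch is also shaky in its details: for a general real form the compact/noncompact dichotomy only makes sense for imaginary roots of a fundamental Cartan (complex roots contribute equally to $\LK$ and $\LP$), and the claim that local triviality at the finite places forces the archimedean class of $b$ to come from an anisotropic-mod-centre torus is asserted, not argued.

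The paper avoids this combinatorics entirely by a quadratic-form argument. The Killing form $B$ restricted to the semisimple part $\LG'_{F,\tau|c}$ of the fixed Lie algebra is non-degenerate and depends, up to isometry, only on the class $[c]$, giving a map $H^1(\tau,G(F)) \to W(F)$ to the Witt ring. Since $b$ is trivial in $H^1(\tau,G(\bbQ_p))$ for all $p$, the $\bbQ$-forms $(\LG'_{\tau},B)$ and $(\LG'_{\tau|b},B)$ become isometric over every $\bbQ_p$, and Weil's product formula then forces their real signatures to agree modulo $8$. Taking the real cocycle valued in a $\tau$-stable maximal compact $K$ (using that $H^1(\tau,K)\to H^1(\tau,G(\bbR))$ is bijective) and the Cartan decomposition, on which $B$ is definite of opposite signs, one gets the identity $2q_{\tau|c} = \dim_\bbR \LG'_{\bbR,\tau|c} + \sign(\tau|c)$; combined with equality of dimensions this yields $2q_\tau \equiv 2q_{\tau|b} \bmod 8$, i.e.\ the desired congruence mod $4$, with no case analysis on $\tau$. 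If you want to salvage your outline, this Witt-ring/signature step is the missing idea you would need to supply in place of the unfinished involution count.
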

\begin{proof}
      Note that the fixed point groups $G^\tau$ and $G^{\tau|b}$ are reductive (cf.~\cite{Jacobson1962}).
      For any cocycle $c$ in $Z^1(\tau,G(\bbR))$ let 
      $q_{\tau|c}$ denote the number $q_{G^{\tau|c}}$ for the reductive group $G^{\tau|c}$ as defined above.
      We show that $q_\tau \equiv q_{\tau|b} \bmod 4$. (We do not exclude the possibility that arithmetic subgroups of $G^{\tau|b}$ have Euler characteristic zero.)

     We start with a useful observation.
     Let $\LG = \LG_\bbQ$ be the Lie algebra of $G$ over $\bbQ$ and let $B$ denote the Killing form on~$\LG$.
     We shall always write $B$ for the restriction of $B$ to some subspace.
     Let $F/\bbQ$ be any field extension and let $c \in Z^1(\tau,G(F))$ be a cocycle. We write $\LG_F := \LG \otimes_\bbQ F$ and we put
     $\sigma = \tau|c$. The fixed point space $\LG_F^\sigma$ is a reductive subalgebra and the restriction of $B$ to this space is non-degenerate.
     This follows from the fact that the action of $\sigma$ preserves the Killing form. 
     Moreover, if we decompose $\LG_F^\sigma = \LZ_{F,\sigma} \oplus \LG'_{F,\sigma}$ into the centre $\LZ_{F,\sigma}$ and the semi-simple part $\LG'_{F,\sigma} = [\LG_F^\sigma,\LG_F^\sigma]$,
     then these spaces are orthogonal with respect to $B$. 
     Finally, suppose $d \in Z^1(\tau, G(F))$ is a cocycle in the class of $c$, then there is $g \in G(F)$ such that $d = g^{-1}c\up{\tau}g$ and $\Ad(g)$ defines an isometry
     between the quadratic spaces $(\LG'_{F,\tau|d},B)$ and $(\LG'_{F,\tau|c},B)$.
     We conclude that there is a well-defined map 
     \begin{equation*}
          \rho_F: H^1(\tau,G(F)) \to W(F)
     \end{equation*}
     defined by $[c] \mapsto (\LG'_{F,\tau|c},B)$ into the Witt ring $W(F)$ of $F$.

     The cocycle $b$ is by assumption in the trivial class in $H^1(\tau,G(\bbQ_p))$ for every prime number $p$. Hence the quadratic $\bbQ$-spaces $(\LG'_\tau,B)$ and $(\LG'_{\tau|b},B)$
     are isometric over all $p$-adic fields. It follows from Weil's product formula (see Ch.~5.~Cor.8.2 in \cite{Scharlau1985})
     that the signatures of the quadratic $\bbR$-spaces $(\LG'_{\bbR,\tau},B)$ and $(\LG'_{\bbR,\tau|b},B)$
     are congruent modulo $8$. Here the signature of a quadratic $\bbR$-space is the difference of
     the dimensions of a maximal positive definite and a maximal negative definite subspace. 
     For $c \in Z^1(\tau,G(\bbR))$ we denote the signature of $(\LG'_{\bbR,\tau|c},B)$ by $\sign(\tau|c)$.
    
     Let $K \subset G(\bbR)$ be a $\tau$-stable maximal compact subgroup and let $c$ be a cocycle in $Z^1(\tau,G(\bbR))$. We study the number $q_{\tau|c}$ associated with $G^{\tau|c}$ or, more precisely, with
     its derived group $G_s^{\tau|c}$. Since the map 
     $H^1(\tau,K) \to H^1(\tau,G(\bbR))$ is bijective (cf.~\cite{AnWang2008}), we assume $c \in Z^1(\tau, K)$.
     In particular, $K$ is stable under $\tau|c$. Consider the Cartan decomposition
     \begin{equation*}
           \LG_\bbR = \LK \oplus \LP_0
     \end{equation*}
     where $\LK$ denotes the Lie algebra of $K$.
     The semi-simple part $\LG'_{\bbR,\tau|c}$ of the fixed point algebra inherits this decomposition
      \begin{equation*}
          \LG'_{\bbR,\tau|c} = (\LG'_{\bbR,\tau|c}\cap\LK)\: \oplus \: (\LG'_{\bbR,\tau|c}\cap\LP_0).
     \end{equation*}
     The Killing form $B$ is negative definite on $\LK$ and positive definite on $\LP_0$.
     Moreover, $\LG'_{\bbR,\tau|c}\cap\LK$ is the Lie algebra of the maximal compact subgroup $G^{\tau|c}_s(\bbR)\cap K$ of $G^{\tau|c}_s(\bbR)$.
     We deduce that
     \begin{equation*}
        2q_{\tau|c} = 2 \dim_\bbR(\LG'_{\bbR,\tau|c}\cap\LP_0) = \dim_\bbR(\LG'_{\bbR,\tau|c}) + \sign(\tau|c).
     \end{equation*}
     The terms on the right hand side only depend on the class of $c$. Further, the dimensions $\dim (\LG'_{\tau})$ and $\dim (\LG'_{\tau|b})$ agree, since $b$ is in the trivial class
    over all $p$-adic fields.
    Thus we have proven $2q_{\tau} \equiv 2q_{\tau|b} \bmod 8$.
\end{proof}

\bibliography{lblibrary}{}

\providecommand{\bysame}{\leavevmode\hbox to3em{\hrulefill}\thinspace}
\providecommand{\MR}{\relax\ifhmode\unskip\space\fi MR }
\providecommand{\MRhref}[2]{%
  \href{http://www.ams.org/mathscinet-getitem?mr=#1}{#2}
}
\providecommand{\href}[2]{#2}
\begin{thebibliography}{10}

\bibitem{Adem1996}
A.~Adem, \emph{{Automorphisms and Cohomology of Discrete Groups}}, J. Algebra
  \textbf{182} (1996), 721--737.

\bibitem{AnWang2008}
J.~An and Z.~Wang, \emph{{Nonabelian cohomology with coefficients in Lie
  groups}}, Trans.~Amer.~Math.~Soc. \textbf{360} (2008), no.~6, 3019--3040.

\bibitem{BLLS2012}
N.~Bergeron, P.~Linnell, W.~L\"uck, and R.~Sauer, \emph{{On the growth of Betti
  numbers in $p$-adic analytic towers}}, to appear in Groups Geom. Dyn.

\bibitem{Borel1969}
A.~Borel, \emph{{Introduction aux groupes arithmetiques}},
  Act.~sci.~ind.~(1341), Hermann, Paris, 1969.

\bibitem{BorelSerre1973}
A.~Borel and J-P. Serre, \emph{{Corners and Arithmetic Groups}},
  Comment.~Math.~Helv. \textbf{48} (1973), 436--491.

\bibitem{BorelTits1972}
A.~Borel and J.~Tits, \emph{{Compl\'ements \`a l'article: "Groupes
  r\'eductifs''}}, Publ. Math. Inst. Hautes \'Etudes Sci. \textbf{41} (1972),
  253--276.

\bibitem{BorelWallach2000}
A.~Borel and N.~Wallach, \emph{{Continuous Cohomology, Discrete Subgroups, and
  Representations of Reductive Groups (2nd ed.)}}, Math.~Surveys and
  Mono.~(67), American Mathematical Society, 2000.

\bibitem{Bredon1972}
G.~E. Bredon, \emph{{Introduction to compact transformations groups}}, Pure and
  Appl. Math. (46), Academic Press, Inc., New York, 1972.

\bibitem{Bredon1997}
G.~E. Bredon, \emph{{Sheaf Theory. 2nd Ed.}}, Grad. Texts Math. 170,
  Springer-Verlag New York, 1997.

\bibitem{BrownBook1982}
K.~S. Brown, \emph{{Cohomology of Groups}}, Grad.~Texts Math.~(87),
  Springer-Verlag, New York, 1982.

\bibitem{CalegariEmerton2009}
F.~Calegari and M.~Emerton, \emph{{Bounds for multiplicities of unitary
  representations of cohomological type in spaces of cusp forms}}, Ann.~of
  Math. (2) \textbf{170} (2009), 1437--1446.

\bibitem{CalegariEmerton2011}
\bysame, \emph{{Mod-$p$ cohomology growth in $p$-adic analytic towers of
  3-manifolds}}, Groups Geom. Dyn. \textbf{5} (2011), 355--366.

\bibitem{DdSMS1991}
J.D. Dixon, M.P.F. du~Sautoy, A.~Mann, and D.~Segal, \emph{{Analytic pro-p
  Groups}}, LMS Lect. Notes Series (157), Cambridge University Press,
  Cambridge, 1991.

\bibitem{Floyd1952}
E.~E. Floyd, \emph{{On periodic maps and the Euler characteristics of
  associated spaces}}, Trans. Amer. Math. Soc. \textbf{72} (1952), 138--147.

\bibitem{Harder1971}
G.~Harder, \emph{{A Gauss-Bonnet formula for discrete arithmetically defined
  groups}}, {Ann.~Sci.~\'Ec.~Norm.~Sup\'er. (4)} \textbf{4} (1971), no.~3,
  409--455.

\bibitem{Humphreys1972}
J.~E. Humphreys, \emph{{Introduction to Lie Algebras and Representation
  Theory}}, Grad.~Texts Math.~(9), Springer-Verlag, New York, 1972.

\bibitem{Jacobson1962}
N.~Jacobson, \emph{{A note on automorphisms of Lie algebras}}, Pacific J. Math.
  \textbf{12} (1962), 303--315.

\bibitem{KSchwermer2012}
S.~Kionke and J.~Schwermer, \emph{{On the growth of the first Betti number of
  arithmetic hyperbolic 3-manifolds}}, to appear in Groups Geom. Dyn.

\bibitem{Lackenby2009a}
M.~Lackenby, \emph{{Large groups, property ($\tau$) and the homology growth of
  subgroups}}, Math. Proc. Cambridge Philos. Soc. \textbf{146} (2009), no.~3,
  625--648.

\bibitem{Lackenby2009b}
\bysame, \emph{{New lower bounds on subgroup growth and homology growth}},
  Proc. London math. soc. \textbf{98} (2009), 271--297.

\bibitem{Lueck1994}
W.~{L\"uck}, \emph{{Approximating $L^2$-invariants by their finite-dimensional
  analogues}}, Geom.~Funct.~Anal. \textbf{4} (1994), 455--481.

\bibitem{Lueck2013}
\bysame, \emph{{Approximating $L^2$-invariants and homology growth}}, Geom.
  Funct. Anal. \textbf{23} (2013), 622--663.

\bibitem{RajanVenky2001}
C.~S. Rajan and T.~N. Venkataramana, \emph{{On the first cohomology of
  arithmetic groups}}, Manuscripta math. \textbf{105} (2001), 537--552.

\bibitem{Rohlfs1990}
J.~Rohlfs, \emph{{Lefschetz numbers for arithmetic groups}}, Cohomology of
  Arithmetic Groups and Automorphic Forms, Proc., Lect.~Notes Math.~(1447),
  Springer-Verlag, 1990, pp.~303--313.

\bibitem{RohlfsSpeh1989}
J.~Rohlfs and B.~Speh, \emph{{Automorphic representations and Lefschetz
  numbers}}, {Ann.~Sci.~\'Ec.~Norm.~Sup\'er. (4)} \textbf{22} (1989), no.~3,
  473--499.

\bibitem{SarnakXue1991}
P.~Sarnak and X.~Xue, \emph{{Bounds for multiplicities of automorphic
  representations}}, Duke Math. J. \textbf{64} (1991), 207--227.

\bibitem{Scharlau1985}
W.~Scharlau, \emph{{Quadratic and Hermitian Forms}}, Grundlehren der
  math.~Wiss.~(270), Springer-Verlag, Berlin, 1985.

\bibitem{Serre1971}
J-P. Serre, \emph{{Cohomologie des groupes discrets}}, Prospects Math.,
  Ann.~Math.~Stud. \textbf{70} (1971), 77--169.

\end{thebibliography}
\bibliographystyle{amsplain}
\end{document}